\documentclass[10pt,a4paper, leqno]{article}
\usepackage[utf8]{inputenc}
\usepackage{amsmath}
\usepackage{graphicx}%
\usepackage{amsfonts}
\usepackage{amssymb}
\usepackage{comment}
\usepackage{hyperref}
\usepackage{enumitem,pstricks,xy,pst-node}
\xyoption{all}
\usepackage{amsthm}
\usepackage{longtable}
\usepackage{caption,subcaption} 
\usepackage{booktabs} 
\usepackage[color]{changebar} 
\cbcolor{red} 
\usepackage{mathrsfs} 
\usepackage{verbatim} 
\usepackage[disable] 
{todonotes}
\usepackage[normalem]{ulem} 
\usepackage{caption} 
\usepackage{nicefrac} 
\captionsetup[table]{skip=5 pt}
\usepackage{enumitem} 
\newtheorem{theorem}{Theorem}[section]

\newtheorem{corollary}[theorem]{Corollary}

\theoremstyle{definition}

\newtheorem{remark}[theorem]{Remark}

\theoremstyle{remark}

\DeclareMathOperator{\Hb}{H}
\DeclareMathOperator{\hsm}{h}
\DeclareMathOperator{\pgl}{PGL}

\newcommand{\codim}{\operatorname{codim}}

\newcommand{\rank}{\operatorname{rank}}

\newcommand{\ladi}{\begin{lastadd}}
\newcommand{\ladf}{\end{lastadd}}
\newcommand{\lrei}{\begin{lastrem}}
\newcommand{\lref}{\end{lastrem}}
\newenvironment{lastadd}
{\cbstart\color{red}}
{\todo{red to remove}\cbend}
\newenvironment{lastrem}
{\cbstart\color{yellow}}
{\cbend}

\author{Hanieh Keneshlou
\and Frank-Olaf Schreyer 
}
\title{The unirational components of the strata of genus $11$ curves with several pencils of degree $6$ in $\mathcal{M}_{11}$}
\begin{document}
\maketitle
\begin{abstract}
We show that the strata $ \mathcal{M}_{11,6}(k) \subset \mathcal{M}_{11} $ of $ 6-$gonal curves of genus $ 11 $, equipped with $k$ mutually independent and type I pencils of degree six, have a unirational irreducible component for $5\leq k\leq 9$. The unirational families arise from degree $ 9 $ plane curves with $ 4 $ ordinary triple and $ 5 $ ordinary double points that dominate an irreducible component of expected dimension. We will further show that the family of degree $ 8 $ plane curves with $ 10 $ ordinary double points covers an irreducible component of excess dimension in $ \mathcal{M}_{11,6}(10) $. 
\end{abstract}
\section*{Introduction}
Let $ C $ be a smooth irreducible $ d-$gonal curve of genus $ g $ defined over an algebraically closed field $ \mathbb{K} $. Recall that by definition of gonality, there exists a $ g^{1}_{d} $ but no $ g^{1}_{d-1} $ on $ C $. It is well-known that $ d\leq [\frac{g+3}{2}]$ with equality for general curves. In a series of papers (\cite{coppens4},\cite{coppens98},\cite{coppens3}, \cite{coppens1}, \cite{coppens2}) Coppens studied the number of pencils of degree $ d $ on $ C $, for various $ d $ and $ g $. For low gonalities up to $ d=5 $, the problem is intensively studied for almost all possible genera. For $ 6-$gonal curves, Coppens has settled the problem only for genera $ g\geq 15 $.\\
\indent
In this paper, we focus on $ 6-$gonal curves of genus $ g=11 $. The motivation for our choice of genus $ 11 $ was the question asked by Michael Kemeny, whether any smooth curve of genus $ 11 $ carrying at least six pencils $ g^{1}_{6}$'s, comes from degree $ 8 $ plane curves with $ 10 $ ordinary double points, where the pencils are cut out by the pencil of lines through each of the singular points. More precisely, there exists no smooth curve of genus $ 11 $ possessing exactly $ 6,7,8 $ or $ 9 $ pencils of degree six. We will show the answer to this question is negative.

Let $ \mathcal{M}_{11,6}(k)\subset \mathcal{M}_{11}$ be the stratum of smooth $ 6-$gonal curves of genus $ 11 $, equipped with exactly $ k $ mutually independent\footnote{Two pencils $ g_1,g_2 $ of degree $ d $ on a smooth curve $ C $ are called independent the corresponding map gives a birational model of $ C $ inside $ \mathbb{P}^{1}\times \mathbb{P}^{1} $} $ g^{1}_{6} $'s of type I\footnote{A base point free pencil $ g^{1}_{d} $ on a smooth  curve $C$ is called of type I if $ \dim \vert 2g^{1}_{d}\vert=2 $. Type I pencils are exactly those that we should count with multiplicity 1.}. We first investigate the possible number of $ g^{1}_{6} $'s on a $ 6-$gonal curve of genus $ 11 $, and therefore the possible values of $ k $ for which $ \mathcal{M}_{11,6}(k) $ is non-empty. In \cite{sometopic}, Schreyer gave a list of conjectural Betti tables for canonical curves of genus $ 11 $. Related to our question and interesting for us is the Betti table of the following form
\begin{center}
\begin{tabular}{|c c c c c c c c c c}
 \hline
  $1$ & $.$ & $ . $& $.$ & $.$ & $.$& $ . $ &$ . $&$ . $&$ . $\\ 
  $.$ & $36$ & $ 160 $&$315$ & $288$ & $5k$& $ . $& $ . $& $ . $& $ . $\\ 
  $.$ &$.$ & $.$ & $ . $& $5k$ & $288$ &$315$ &$ 160 $&$ 36 $&$ . $\\  
  $.$ &$.$ & $.$ & $ . $& $.$ & $.$ &$.$ &$ . $&$ . $&$ 1 $             
\end{tabular}
\label{bettitable} 
\end{center}
where $ k $ is expected to have the values $ k=1,2,\ldots,10,12,20$. Although, in view of Green's conjecture \cite{green}, it is not clear that for a smooth canonical curve of genus $ 11 $ with Betti table as above, the number $ k $ can always be interpreted as the multiple number of pencils of degree six existing on the curve. Nonetheless, for $ k=1,2,\ldots,10,12,20 $ we can provide families of curves, whose generic element carries exactly $ k $ mutually independent pencils of type I. The critical Betti number in this case is $ \beta_{5,6}=\beta_{4,6}=5k $ as expected. Therefore, in this range the stratum $ \mathcal{M}_{11,6}(k) $ is non-empty.\\

The first natural question is then to ask about the geometry of the stratum $ \mathcal{M}_{11,6}(k) $, in particular about its unirationality.\\

For $ k=1 $, the corresponding stratum is the famous Brill-Noether divisor $ \mathcal{M}_{11,6} $ of $ 6-$gonal curves \cite{harrismumford}, which is irreducible and furthermore known to be unirational \cite{GeissUnirationality}. The stratum $  \mathcal{M}_{11,6}(2) $ is irreducible \cite{tyomkin}, and unirational such that a general element of $ \mathcal{M}_{11,6}(2) $ can be obtained from a model of bidegree $ (6,6) $ in $ \mathbb{P}^{1}\times \mathbb{P}^{1}$ with $ \delta=14 $ ordinary double points. In \cite{keneshlou} it has been also shown that $ \mathcal{M}_{11,6}(3) $ has a unirational irreducible component of expected dimension. A general curves lying on this component can be constructed via liaison in two steps from a rational curve in multiprojective space $ \mathbb{P}^{1}\times \mathbb{P}^{1}\times \mathbb{P}^{1}$.\\

Here we construct rational families of curves with additional pencils from plane curves of suitable degrees with only ordinary multiple points, as singularities. As the first significant result (Theorem \ref{degree9}), we will prove that for $ 5\leq k\leq9 $, the stratum $ \mathcal{M}_{11,6}(k)$ has a unirational irreducible component of expected dimension. A general curve lying on this component arises from a degree $ 9 $ plane model with $ 4 $ ordinary triple and $ 5 $ ordinary double points which contains $ k-5 $ points among the ninth fixed point of the pencil of cubics passing through the $ 4 $ triple and $ 4 $ chosen double points .

The key technique of the proof is to study the space of first order equisingular deformations of plane curves with prescribed singularities, as well as that of the first order embedded deformations of their canonical model. In fact, denoting by $ M $ the $ 5k\times 5k $ submatrix in the deformed minimal resolution corresponding to the general first order deformation family of a canonical curve $ C $ with Betti table as above, we use the condition $ M=0 $ to determine the subspace of the deformations with extra syzygies of rank $ 5k $. It turns out that for $  5\leq k\leq 9 $, and respectively $ k $ linearly independent linear forms $ l_1,\ldots, l_k $ in the free deformation parameters corresponding to a basis of $ T_C\mathcal{M}_{11} $, we have $ \det M=l_1^{5}\cdot \ldots \cdot l_k^{5} $. This implies that $\mathcal{M}_{11,6}(k) $ has an irreducible component of exactly codimension $ k $ inside the moduli space $  \mathcal{M}_{11}  $. Furthermore, let $ \mathcal{K}_{11} $ to be the locus of the curves $ C\in \mathcal{M}_{11} $ with extra syzygies, that is $ \beta_{5,6}\neq 0 $. It is known by Hirschowitz and Ramanan \cite{hirsch} that $ \mathcal{K}_{11} $ is a divisor, called the Koszul divisor, such that $ \mathcal{K}_{11}=5 \mathcal{M}_{11,6}  $. Thus, $ \mathcal{M}_{11,6}  $ at the point $ C $ is locally analytically the union of $ k $ smooth transversal branches.\\
We will then compute the kernel of the Kodaira-Spencer map and from that the rank of the induced differential maps, in order to show that the rational families of plane curves dominate this component.

By following the similar approach, we obtain our second main result. We show that the family of degree $ 8 $ plane curves with $ 10 $ ordinary double points covers an irreducible component of excess dimension in $  \mathcal{M}_{11,6}(10) $ (Theorem \ref{degree10}).

This paper is structured as follow. In section \ref{sec:familiesofcurves} we recall some basics of deformation theory for smooth and singular plane curves. In section \ref{sec:tangentspacecomputation} we deal with the computation of the tangent spaces to our parameter spaces and we continue by proving the main theorems on unirationality in section \ref{sec:injectivity}. In the last section \ref{sec:Further components}, using the syzygy schemes of the curves, we study the irreducibility of these strata.

Our results and conjectures rely on the computations and experiments, performed by the computer algebra system \textit{Macaulay2} \cite{m2} and uses the supporting functions in the packages \cite{kenesh} and \cite{kensch}.
\section*{\small{\textbf{Acknowledgement}}}
We would like to thank Michael Kemeny for discussing this question with us which was the motivation point of this work.
This work is a contribution to Project I.6 within the SFB-TRR 195  “Symbolic Tools in Mathematics and their Application” of the German Research Foundation (DFG).

\section{Planar model description}
\label{sec:Planar model description}
In this section, we describe families of plane curves of genus $11 $ carrying $ k=4,\ldots,10,12,20 $ pencils. In particualr, we give a model of genus 11 curve with infinitely many pencils, arised as the triple cover of an elliptic curve. Throughout this paper, to avoid iteration, a pencil is always of the degree six, unless otherwise mentioned, and several pencils on a curve are supposed to be mutually independent of type I.\\ 

We first deal with the construction of plane model for smooth curves of genus $ 11 $ with $ k=5,\ldots,9 $ pencils. Clearly, smooth curves of genus $ 11 $ with ten pencils can be constructed from a plane model of degree $ 8 $ with $ 10 $ ordinary double points in general position. The code provided by the function \texttt{random6gonalGenus11Curve10pencil} in \cite{kenesh}, uses this plane model to produce a random canonical curve of genus $ 11 $ with exactly $ 10g^{1}_{6}$'s. We remark that, although we further provide a method to produce curves with $ k=4,12 $ pencils, by dimension reasons the rational family obtained from these models may not cover any component of the corresponding stratum.\\

\noindent
\textsc{model of curves with $ 5\leq k\leq 9 $ pencils} \\

Let $ P_1,\ldots,P_4, Q_1,\ldots,Q_5 $ be general points in the projective plane $ \mathbb{P}^{2} $ and let $ \Gamma \subset \mathbb{P}^{2} $ be a plane curve of degree $ 9 $ with $ 4 $ ordinary triple points $ P_1,\ldots,P_4 $, and $ 5 $ ordinary double points $ Q_1,\ldots,Q_5 $. We note that, since an ordinary triple (resp. double) point in general position imposes six (resp. three) linear conditions, such a plane curve with these singular points exists as 
\[ \binom{9+2}{2}-6\cdot 4-3\cdot 5>0. \]
Blowing up these singular points
$$ \sigma: \widetilde{\mathbb{P}}^{2}=\mathbb{P}^{2}(\ldots, P_i,\ldots,Q_j,\ldots)\longrightarrow  \mathbb{P}^{2},$$
let $ C\subset \widetilde{\mathbb{P}}^{2} $ be the strict transformation of $ \Gamma $ on the blown up surface of $  \mathbb{P}^{2} $. Hence,
$$ C \sim 9H-\sum_{i=1}^{4}3E_{P_i}-\sum_{j=1}^{5}2E_{Q_j},$$
where $ H $ is the pullback of the class of a line in $ \mathbb{P}^{2} $, and $ E_{P_i} $ and $ E_{Q_j} $ denote the exceptional divisors of the blow up at the points $ P_i $ and $ Q_j $, respectively. 
By the genus-degree formula, $ C $ is a smooth curve of genus $ 11=\binom{9-1}{2}-4.3-5 $. Moreover, $ C $ admits five mutually independent pencils of type I. Indeed, for $ i=1, \ldots,4 $ the linear series $ \vert H-E_{P_i}\vert $, identified with the pencil of lines through the triple point $ P_i $ induces a base point free pencil $G_{i} $ on $ C $. As by adjunction, the canonical system $ \vert K_C\vert $ is cut out by the complete linear series
$$ \vert C+K_{\widetilde{\mathbb{P}}^{2}}\vert=\vert 6H-\sum_{i=1}^{4}2E_{P_i}-\sum_{j=1}^{5}E_{Q_j}\vert,$$ 
the linear series $ \vert K_{C}-2G_{i}\vert $ is cut out by
$$ \vert 4H-\sum_{i=1}^{4}2E_{P_i}-\sum_{j=1}^{5}E_{Q_j}+2E_{P_i}\vert.$$ 
Therefore, we have $\dim \vert K_{C}-2G_{i}\vert=0  $ and by Riemann--Roch $ \dim \vert 2G_{i}\vert=2 $.  Thus, the induced pencils from linear system of lines through each of the triple points are of type I. Furthermore, the linear series $ \vert 2H-\sum_{i=1}^{4}E_{P_i}\vert $ identified with the the pencil of conics through the four triple points induces an extra pencil $ G_5 $ on $C$. Similarly by adjunction, the corresponding linear system $ \vert K_{C}-2G_5\vert $ can be identified with the linear system of quadrics containing the double points.  We obtain $\dim \vert K_{C}-2G_5\vert=0  $, which then Riemann--Roch implies that $ \dim \vert 2G_5\vert=2 $. Hence, this gives another pencil of type I. In this way we obtain smooth curves of genus $ 11 $ having five pencils. \\

In order to get the model of curves with further pencils, we impose certain one dimensional conditions on the plane curve of degree $ 9 $ such that each condition gives exactly one extra $ g^{1}_{6} $.\\
 
For $ j=1,\ldots,5 $, let $ R_j $ be the ninth fix point of the pencil of cubics through the eight residual singular points by omitting $ Q_j $. The condition that $ R_j$ lies on the plane curves
imposes exactly one condition on linear series of degree $ 9 $ plane curves with $ 4 $ ordinary triple points at $ P_i $'s and $ 5 $ ordinary double points at $ Q_j $'s. On the other hand, the linear series $$ \vert 3H-\sum_{i=1}^{4}E_{P_i}-\sum_{j=1}^{5}E_{Q_j}+E_{Q_j}\vert $$
induces a pencil $ G^{\prime}_j $ of degree $ 7 $ with a fix point at $ R_j $. Therefore, by forcing the degree $ 9 $ plane curves to pass additionally through each $ R_j $, we obtain one further pencil of type I, given by $G^{\prime}_{j}-R_j$. This way, by choosing $0\leq m\leq 4$ points among $ R_1, \ldots,R_5 $, we get families of smooth curves of genus $ 11 $ possessing up to nine pencils. The function \texttt{random6gonalGenus11Curvekpencil} in \cite{kenesh} is an implementation of the above construction which produces a random canonical curve of genus $ 11 $ possessing $ 5\leq k\leq 9 $ pencils.
 \begin{figure}[!hbt]
    \centering
    \begin{subfigure}{0.498\linewidth} 
        \centering
        \includegraphics[width=\textwidth]{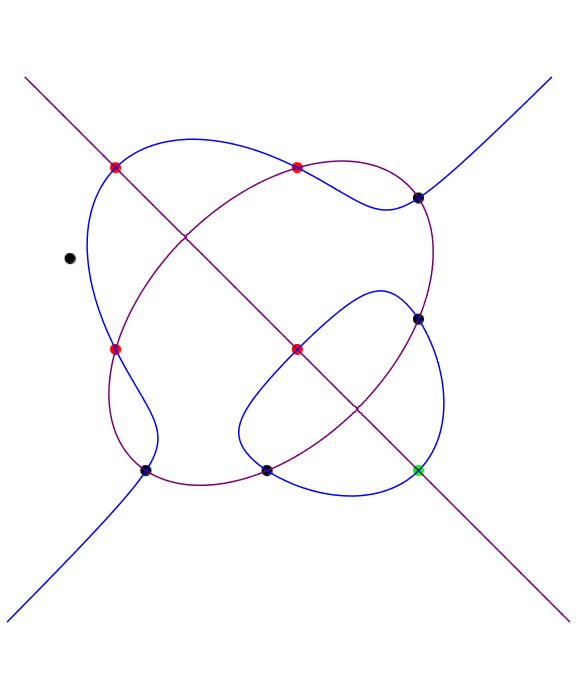}
        \caption*{Figure 4.1: Two cubics through 8 points by omitting one of the double points}
        \label{fig:1}
    \end{subfigure}
    \begin{subfigure}{0.5\linewidth}
        \centering
        \includegraphics[width=\textwidth]{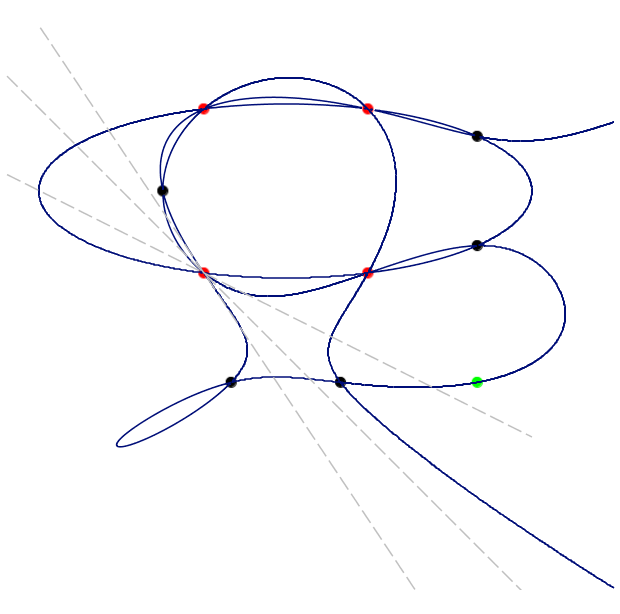}
        \caption*{Figure 4.2:  Plane curve of degree $ 9 $ with six pencils passing through the ninth fixed point}
        \label{fig:2}
    \end{subfigure}
\end{figure}
 \begin{remark}
\label{failur}
Although we expect that plane curves of degree $ 9 $ with singular points as above, passing through all the five fixed points $ R_1,\ldots,R_5 $, lead to the model of curves of genus $ 11 $ with ten pencils, our experimental computations show that such a curve is in general reducible. It is a union of a sextic and the unique cubic through the five double points and $ R_1,\ldots,R_5 $, which has further singular points than expected. Thus, our pattern fails to cover the case $ k=10 $.
\end{remark}
Our families of plane curves depend on expected number of parameters as desired. In fact, let 
\[\mathcal{V}_{9}^{4,5,m}:=\lbrace (\Gamma;P_1,\ldots,P_4, Q_1,\ldots, Q_5) \rbrace \subset \mathbb{P}^{N}\times (\mathbb{P}^{2})^{9}\]
denote the variety, where $ N=\binom{9+2}{2}-1 $ and $  \Gamma\subset \mathbb{P}^{2} $ is a plane curve of degree $ 9 $ with prescribed singular points passing through $0\leq m\leq 4$ points among $ R_1,\ldots,R_5 $ as above. As an ordinary triple (resp. double) point in general position imposes six (resp. three) linear conditions, we expect naively that each irreducible component of $ \mathcal{V}_{9}^{4,5,m}$  has dimension
\[\frac{9(9+3)}{2}+2\cdot 9-3\cdot 5-6\cdot 4-m=33-m.\]
Identifying the plane curves under automorphisms of $ \mathbb{P}^{2} $ reduces this dimension by $ 8=\dim \pgl(2) $. From Brill-Noether theory this fits to the expected dimension of the stratum $ \mathcal{M}_{11,6}(k)\subset \mathcal{M}_{11} $, of curves possessing $ k=m+5 $ pencils. In fact, denoting by $ \rho $ the Brill-Noether number, we have 
\[ \dim \mathcal{M}_{11,6}(k) \geq 3\cdot 11-3+(m+5)\rho(11,6,1)=25-m. \]

\noindent
\textsc{models of curves with $ k=4 $ pencils.} \\

Let $ P_1,P_2,P_3, Q_1,\ldots,Q_7$ be general points in the projective plane and  $ R $ be the ninth fix point of a pencil of cubics through eight points, obtained by omitting two of $ Q_i$'s. Then, the normalization of a general degree $ 9 $ plane curve with ordinary triple points at $ P_1,P_2,P_3 $ and ordinary double points at $ Q_1,\ldots,Q_7,R$ is a smooth curve of genus $ 11 $ that carries exactly $ k=4 $ pencils. In fact, the three pencils are induced from the pencil of lines through each of the triple points and the pencil of cubics through the eight points gives the extra $ g^{1}_{6} $. In \cite{kenesh}, this construction is implemented in the function \texttt{random6gonalGenus11Curve4pencil}.
\begin{remark}
\label{k4}
The number of parameters for the choice of ten points in the plane as above plus the dimension of the linear system of plane curves of degree $ 9 $ with ordinary triple points at $ P_1,P_2,P_3 $ and ordinary double points at $ Q_1,\ldots,Q_7,R$ amounts to $ 32 $ parameters. Therefore, modulo the isomorphisms of the projective plane, we obtain a family of smooth curves of genus $ 11 $ with exactly $ k=4 $ pencils and smaller dimension than $ 26 $, which is the expected dimension of $ \mathcal{M}_{11,6}(4) $. Thus, the rational family of curves obtained from this model cannot cover any component of $ \mathcal{M}_{11,6}(4) $.
\end{remark}
\noindent
\textsc{models of curves with $ k=12 $ pencils} \\

Let $ P_1,\ldots,P_{10} $ be general points in the projective plane and $  V_1 \subset \vert L\vert=\vert 4H-\sum_{i=1}^{10}E_{P_i}\vert $ be a pencil in the linear system of quartics passing through these points. Let $ q_1,\ldots,q_6 $ be the further fixed points of this pencil. Then, normalization of a degree $ 8 $ plane curve $ \Gamma $ with $ 10 $ ordinary double points $ P_1,\ldots,P_{10} $ and passing through $ q_1,\ldots,q_6 $, carries exactly twelve pencils. One has $10$ pencils cut out by the lines through each of the double points and also the $g^1_6$ cut out by $V_1$. Moreover, considering $ Q_1,\ldots,Q_6 $ to be the six moving points of a divisor in $ V_1 $, our experiments show that $ Q_1,\ldots,Q_6 $ are the extra fixed points of an another pencil $ V_2 \subset \vert L\vert $. Namely, there is a two dimentional vector space of quartics passing through $P_1,\ldots,P_{10}, Q_1,\ldots,Q_6$ cutting out the twelfth $g^1_6$. More precisely, let $\mathbb{P}^2\dashrightarrow \mathbb{P}^4$ be the rational map associated to $|L|$. The image of $\Gamma$ under this map is a curve $C$ of degree $12$, which is cut out by a
unique rank $4$ quadric hypersuface $Q$ on the determinantal image surface of $\mathbb{P}^2$. As the divisors of the linear series $|L|$ are cut out by the linear system of hyperplanes on $C$, and the six fixed points impose exactly three linearly independent
conditions on this linear series, they span a projective plane $\mathbb{P}^2\subset Q$ and they do
not lie on a conic. As $Q$ is isomorphic to the cone over $\mathbb{P}^1\times \mathbb{P}^1$, the projections
to each projective line naturally give two extra pencils.
In \cite{kenesh}, the function \texttt{random6gonalGenus11Curve12pencil} uses this method to produce a random canonical curve of genus $ 11 $ carrying exactly twelve pencils.\\

\noindent
\textsc{models of curves with $ k=20 $ pencils} \\

Let $ C $ be a smooth curve of genus $ 11 $ with a linear system $ g^3_{10} $. The space model of $ C $  has exactly twenty 4-secant lines which cut out the twenty pencils. A plane curve of degree $ 9 $ with $ 5 $ ordinary triple and $ 2 $ ordinary double points provides a model of such curves. Using this pattern, in \cite{kenesh}, the function \texttt{random6gonalGenus11Curv20pencil} gives model of genus $ 11 $ curves with $ 20g^1_6$'s. \\

\noindent
\textsc{models of curves with infinitely many pencils} \\

Let $ E\subset \mathbb{P}^{2} $ be a smooth plane cubic, and consider $ X_1:=E\times \mathbb{P}^{1}\subset \mathbb{P}^{2}\times \mathbb{P}^{1}$ as a hypersurface of bidegree $ (3,0) $ containing two random lines $ L_1, L_2 $ and four points $ P_1,\ldots,P_4 $. Choosing a random hypersurface $ X_2 $ of bidegree $ (3,3) $ with double points at $ P_i's $ and containing the two lines, we obtain the complete intersection $  X_1\cap X_2=C\cup L_1\cup L_2$, where $ C $ is the triple cover of the elliptic curve $ E $ of bi-degree $ (9,7) $ in $ \mathbb{P}^{2}\times \mathbb{P}^{1} $. Naturally, $ C $ admits infinitely many pencils which are cut out by the pencil of lines through random points of $ E $. In \cite{kenesh}, this algorithm is implemented in the function \texttt{random6gonalGenus11CurveInfinitepencil} and produces model of $ C $ of $ \deg(C)=16 $ in $ \mathbb{P}^{5} $. Considering the space of hyperplanes through three general points of $ C $, we obtain a $ g^2_{13} $. Using this linear series one can compute the plane model and from that the canonical model of $ C $ which leads into the Betti number $ \beta_{5,6}=25 $. With the same approach, and starting from three lines and the choice of two points, we obtain a genus $ 11 $ triple cover of an elliptic curve of bi-degree $ (9,6) $ whose canonical model has the Betti number $ \beta_{5,6}=30 $.

\section{Families of curves and their deformation}\label{sec:familiesofcurves}
To study the local geometry of parameter spaces introduced in the previous section, and also the strata of the smooth curves with several pencils, we study the space of the first order deformation of curves. This leads to the computation of the tangent space at the corresponding points in the moduli. We recall some basics on deformation theory for smooth and singular plane curves which can be found in the standard textbook \cite{Sernesi}.\\

Let $C\subset \mathbb{P}^{n} $ be a smooth curve and $ \mathcal{N}_{C/\mathbb{P}^{n}}=\mathcal{H}om_{\mathcal{O}_C}(\mathcal{I}/\mathcal{I}^{2},\mathcal{O}_C) $ denote the normal bundle  of $C$ in $ \mathbb{P}^{n} $. The space of global sections $ \Hb^{0}(C,\mathcal{N}_{C/\mathbb{P}^{n}}) $ parametrizes the set of first order embedded deformations of $ C$ in $ \mathbb{P}^{n}$. This is precisely the tangent space to the Hilbert scheme $ \mathcal{H}_{C/\mathbb{P}^{n}} $ of $ C $ inside $  \mathbb{P}^{n} $ (see \cite{Sernesi}, Theorem 3.2.12).

An important refinement of the embedded deformation of a smooth curve is consideration of flat families of curves inside a projective space having prescribed singularities, that is of families whose members have the same type of singularities in some specified sense. This leads to the notion of equisingularity.\\

Let $ \Gamma\subset \mathbb{P}^{2} $ be a singular plane curve. There exists an exact sequence of coherent sheaves on $ \Gamma $,
\[ 0\longrightarrow T_{\Gamma}\longrightarrow T_{\mathbb{P}^{2}}\vert_{\Gamma}\longrightarrow \mathcal{N}_{\Gamma/\mathbb{P}^{2}}\longrightarrow T^{1}_{\Gamma}\longrightarrow 0, \]
where the two middle sheaves are locally free, whereas the first one is not (see \cite{Sernesi}, Proposition 1.1.9). The sheaf $ T^{1}_{\Gamma} $ is the so-called cotangent sheaf, supported on the singular locus of $ \Gamma $. The \textit{equisingular normal sheaf} of $ \Gamma $ in $ \mathbb{P}^{2}$ is defined to be
\[  \mathcal{N}^{\prime}:=\ker[\mathcal{N}_{\Gamma/\mathbb{P}^{2}}\longrightarrow T^{1}_{\Gamma}],  \]
which describes deformations preserving the singularities of $ \Gamma $. In fact, the vector space $ \Hb^{0}(\Gamma,\mathcal{N}^{\prime}_{\Gamma/\mathbb{P}^{2}}) $ parameterizes the locally trivial first order deformations of $ \Gamma $ in $ \mathbb{P}^{2} $ having the prescribed singularities as $ \Gamma $ (See \cite{Sernesi},  Section 4.7.1). 
In particular, the equisingular normal bundle fits into the short exact sequence
\begin{equation}\label{short}
 0\longrightarrow \mathcal{O}_{\mathbb{P}^{2}}\longrightarrow \mathcal{I}(d)\longrightarrow \mathcal{N}^{\prime}_{\Gamma/\mathbb{P}^{2}}\longrightarrow 0,  
\end{equation}
where $ \mathcal{I} $ is the ideal sheaf locally generated by the partial derivatives of a local equation of $ \Gamma$, and the first injective map is defined by multiplication by an equation of $ \Gamma $ (See \cite{Sernesi}, page 55).
\section{The tangent space computation}\label{sec:tangentspacecomputation}
In this section, we compute the tangent space to the parameter space $ \mathcal{V}_{9}^{4,5,m} $ as well as that to the strata $ \mathcal{M}_{11,6}(k)\subset \mathcal{M}_{11} $. We further prove the existence of a component with expected dimension on both spaces.
\vskip 0.02cm
\begin{theorem}
\label{tangent}
For $ m=0,\ldots,4 $, the parameter space $\mathcal{V}_{9}^{4,5,m} $ has an irreducible component of expected dimension.
\end{theorem}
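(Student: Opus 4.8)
The plan is to exhibit a single point of $\mathcal{V}_9^{4,5,m}$ at which the local dimension equals the predicted value $33-m$. For a fixed point of $\mathbb{P}^2$, requiring an ordinary triple point imposes $6$, a node $3$, and passing through a point $1$ linear condition on $H^0(\mathbb{P}^2,\mathcal{O}(9))$, so $\mathcal{V}_9^{4,5,m}$ is cut out inside the smooth $72$-dimensional variety $\mathbb{P}^N\times(\mathbb{P}^2)^9$ by at most $24+15+m$ equations. Hence every irreducible component has dimension $\geq 33-m$, and it suffices to find one component that does not exceed this bound.

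Consider the projection $p\colon\mathcal{V}_9^{4,5,m}\to(\mathbb{P}^2)^9$ onto the nine marked points. Over a configuration $(P_1,\dots,P_4,Q_1,\dots,Q_5)$ the fibre of $p$ is the open locus --- consisting of curves whose singularities are precisely ordinary of the prescribed multiplicities and which are otherwise smooth --- inside the linear system $|\mathcal{L}|\subseteq|\mathcal{O}_{\mathbb{P}^2}(9)|$ of degree $9$ curves singular to the required order at the $P_i$ and $Q_j$ and passing through the $m$ chosen ninth base points $R_j$. Suppose, for the moment, that for a general configuration all $24+15+m$ of these linear conditions on $H^0(\mathcal{O}(9))$ are independent. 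Then $\dim|\mathcal{L}|=54-(24+15+m)=15-m$; a general member of $|\mathcal{L}|$ has reduced tangent cone at each forced singular point and, by Bertini, no other singularity, so its normalisation has genus $\binom{8}{2}-4\cdot3-5=11$ and it defines a point of $\mathcal{V}_9^{4,5,m}$. Therefore, over the open set $U\subseteq(\mathbb{P}^2)^9$ of such configurations, $\mathcal{V}_9^{4,5,m}$ is an open dense subset of a $\mathbb{P}^{15-m}$-bundle over $U$, hence irreducible and smooth of dimension $18+(15-m)=33-m$; its closure is an irreducible component, since any component of $\mathcal{V}_9^{4,5,m}$ containing it dominates $(\mathbb{P}^2)^9$ and so has generic fibre of dimension $15-m$ over $U$, forcing its dimension to be $\leq 33-m$, hence $=33-m$. (Equivalently, at the point just constructed the Zariski tangent space of $\mathcal{V}_9^{4,5,m}$ has dimension $33-m$; the independence of the defining conditions is an $H^1$-vanishing of the kind appearing in Proposition~\ref{equisingular}.)

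What remains --- and this is the real content --- is the independence of the $24+15+m$ conditions for a general choice of the nine points. The $24+15$ conditions from the triple and double points are independent: this is the statement that four general triple points and five general double points impose independent conditions on plane curves of degree $9$, which is standard for multiplicities $\leq3$ and this degree and in any case can be read off from the sequence of Proposition~\ref{equisingular} for a general such curve. The genuinely new ingredient is the effect of the conditions $\Gamma\ni R_j$: since $R_j$ arises from the remaining eight points only through the rational ``ninth base point of the pencil of cubics'' construction, nothing formal guarantees that $R_j$ lies off the base locus of the already-constrained linear system, nor that the chosen $R_j$'s impose independent conditions on it. I would settle this by fixing one explicit rational configuration of the nine points, computing in \emph{Macaulay2} the rank of the $(24+15+m)\times 55$ matrix of all of these conditions on $H^0(\mathcal{O}(9))$, checking that it equals $24+15+m$, and then invoking upper-semicontinuity of the fibre dimension of $p$ to conclude that independence holds over a dense open $U\subseteq(\mathbb{P}^2)^9$. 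The main obstacle is precisely this last step: the structural arguments above are routine once independence is known, but the independence of the $R_j$-conditions is not formal and appears to require an honest computer-assisted verification together with a semicontinuity argument to transport it from one configuration to the general one.
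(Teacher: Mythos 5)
Your proposal is correct in substance but reaches the dimension bound by a different route than the paper. The paper works directly on the incidence variety: it writes out the defining equations of $\mathcal{V}_9^{4,5,m}$ near a point $(\Gamma;P_1,\ldots,P_4,Q_1,\ldots,Q_5)$ (vanishing of the relevant partials of $\bar h$ at the nine moving points, plus the $m$ conditions at the $R_l$), linearizes them, and verifies by a \emph{Macaulay2} computation at one random point that the Zariski tangent space has dimension exactly $33-m$; combined with the naive lower bound from the parameter count this pins down the component through that point, and the tangent space so computed is reused later (cf.\ Remark~\ref{cohomologygroup}, where it is identified with $\Hb^{0}(\mathbb{P}^2,\mathcal{I}_{\Delta}(9))/\langle f\rangle$). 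You instead fibre over $(\mathbb{P}^2)^9$ and reduce everything to the independence of the $24+15+m$ linear conditions on $\Hb^{0}(\mathcal{O}_{\mathbb{P}^2}(9))$ for one explicit configuration, again checked by computer and spread out by semicontinuity; this buys the generic $\mathbb{P}^{15-m}$-bundle structure (hence rationality of the component of the parameter space), which the paper obtains only implicitly, while the paper's computation buys the tangent-space description needed in Sections~3--4. Your identification of the $R_j$-conditions as the genuinely non-formal input is exactly right. One soft spot: your appeal to Bertini plus ``reduced tangent cone'' to conclude that a general member of $|\mathcal{L}|$ has precisely the prescribed ordinary singularities is not formal --- Remark~\ref{failur} shows that for $m=5$ the analogous general member degenerates --- so the non-emptiness of the fibre with exactly the prescribed singularities must itself be certified by an explicit example (as the constructions of Section~\ref{sec:Planar model description} do); since openness of ``exactly these singularities'' inside $|\mathcal{L}|$ then finishes the argument, this is an easy patch to fold into the same computation rather than a genuine gap.
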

\begin{proof}
Let $ (\Gamma;P_1,\ldots,P_4, Q_1,\ldots, Q_5) \in \mathcal{V}_{9}^{4,5,m} $ be a point corresponding to a plane curve $ \Gamma:(f=0)\subset \mathbb{P}^{2}$ with prescribed singular points and passing through $ R_1,\ldots,R_m$. Assume $ x,y,z $ are the coordinates of the projective plane. Considering $\Gamma$ as a point in the parameter space $ \mathbb{P}^{{{9+2}\choose{2}}-1} $ of degree $ 9 $ plane curves, without loss of generality we can assume it lies in the affine chart, which does not contain the point $ (1:0:0) $. Moreover, to simplify our notations, we can assume all the distinguished points of $ \Gamma $ are in the open affine subset of $ \mathbb{P}^{2} $ defined by $ z=1 $. Thus, $ \Gamma $ is locally defined by $f=\sum_{u,v}a_{uv}x^{u}y^{v}$ such that $ a_{9,0}=1$, $ (x_i,y_i)$ for  $ \ 1\leq i\leq 9 $ are the affine coordinates of the singular points and $ (x'_l,y'_l) $ is the affine coordinate of $ R_l $. Therefore, in a neighbourhood of $ \Gamma $, the space $\mathcal{V}_{9}^{4,5,m} $ is the set of pairs $ (\bar{h}; S_1,\ldots,S_9) $ with $\bar{h}=\sum_{u,v}b_{uv}x^{u}y^{v}$, $ b_{9,0}=1 $ and $ S_i=(X_i,Y_i) $ for $ 1\leq i\leq 9 $, satisfying the following equations:
$$ R_{i,s,t}(\ldots,b_{uv},\ldots, X_j,Y_j,\ldots):=\frac{\partial \bar{h}}{\partial^{t}x\partial^{^{s-t}}y}(X_i,Y_i)=0,  $$
for $ 1\leq i\leq 4,\ s=0,1,2,\ t\in\lbrace 0,\ldots,s\rbrace  $,
$$ R'_{i,s,t}(\ldots,b_{uv},\ldots, X_j,Y_j,\ldots):=\frac{\partial \bar{h}}{\partial^{t}x\partial^{^{s-t}}y}(X_i,Y_i)=0,  $$
for $ \ 5\leq i\leq 9,\ s=0,1,\ t\in\lbrace 0,\ldots,s\rbrace  $ and 
$$ F_l:=(\sum_{u,v}b_{uv}x^{u}y^{v})(X'_l,Y'_l)=0, \ \ \forall\ 1\leq l\leq m,$$
where $ (X'_l,Y'_l) $ are the coordinates of $ m $ points among the fixed points. Then, the tangent space at $ \Gamma $ is the set of points $ (\bar{g}; T_1,\ldots,T_9) $ with $\bar{g}=\sum_{u,v}c_{uv}x^{u}y^{v}$, $ c_{9,0}=1, c_{uv}=a_{uv}+b_{uv} $ for $ u\neq 9 $ and $ T_i=(x_i+X_i,y_i+Y_i)$ for $ 1\leq i\leq 9 $, satisfying the following equations with indeterminate in $ \ldots,b_{uv},\ldots,X_j,Y_j,\ldots $: 
\begin{align*}
&\sum_{\substack{u,v\geq 0\\ u+v\leq 9\\ u\neq 9}}b_{uv}\frac{\partial R_{i,s,t}}{\partial b_{uv}}(\ldots,a_{uv},\ldots,x_i,y_i,\ldots)\\
& + \sum _{\alpha=0} ^{9}[X_\alpha\frac{\partial R_{i,s,t}}{\partial X_{\alpha}}(\ldots,a_{uv},\ldots,x_i,y_i,\ldots)+Y_{\alpha}\frac{\partial R_{i,s,t}}{\partial Y_{\alpha}}(\ldots,a_{uv},\ldots,x_i,y_i,\ldots)]=0
\end{align*}
for all $ 1\leq i\leq 4,\ s=0,1,2,\ t\in\lbrace 0,\ldots,s\rbrace  $, the same relation with $ R'_{i,s,t} $, for all $ \ 5\leq i\leq 9,\ s=0,1,\ t\in\lbrace 0,\ldots,s\rbrace  $ and 
$$ \sum _{\substack{u,v\geq 0\\ u+v\leq 9\\ u\neq 9}}b_{uv}\frac{\partial \bar{h}}{\partial b_{uv}}(x'_l,y'_l)=0, \ \ \ \forall\ 1\leq l\leq m.  $$  
In \cite{kenesh}, the code provided by the implemented function \texttt{verifyAssertion(1)} uses this method to compute the tangent space as the space of solutions to the above equations. Our computation of an explicit example for a randomly chosen point on $ \mathcal{V}_{9}^{4,5,m} $ shows that this space is of dimension $ 33-m$. Therefore, the irreducible component of $ \mathcal{V}_{9}^{4,5,m} $ containing that point is of expected dimension. 
\end{proof}
\begin{remark}
\label{cohomologygroup}
Let $ (\Gamma;P_1,\ldots,P_4, Q_1,\ldots, Q_5) \in \mathcal{V}_{9}^{4,5,0}$ be a point and let $ \Delta $ denote the singular locus of the corresponding plane curve with prescribed number of double and triple points. Via the first projection map 
$$p_1:\mathcal{V}_{9}^{4,5,0}\longrightarrow \mathcal{V}^{4,5}_{9}\subset \mathbb{P}^{N},$$
the variety $\mathcal{V}_{9}^{4,5,0} $ maps one-to-one to the Severi variety $ \mathcal{V}^{4,5}_{9} $, parametrizing the degree $ 9 $ plane curves with $ 4 $ ordinary triple points and $ 5 $ ordinary double points. This way, we can naturally denote $ \mathcal{V}_{9}^{4,5,0} $ by $ \mathcal{V}_{9}^{4,5} $ and identify the tangent space to $\mathcal{V}_{9}^{4,5,0}$ at $ \Gamma $ with the space of the first order deformation of $ \Gamma\in \mathcal{V}^{4,5}_{9}$. Thus, from the short exact sequence $\ref{short}$ we obtain
$$ T_{\Gamma}\mathcal{V}_{9}^{4,5}\cong \Hb^{0}(\mathbb{P}^{2},\mathcal{I}_{\Delta}(9))/\langle f\rangle, $$
where $ \langle f\rangle $ is the one-dimensional vector space generated by the defining equation of $ \Gamma $. Moreover, for $ m>0 $ the computed tangent space to $ \mathcal{V}_{9}^{4,5,m} $ at a random point as in Theorem \ref{tangent}, can be regarded as a subspace of such a vector space.
\end{remark}

\indent
Now we turn to the computation of the tangent space to the strata $ \mathcal{M}_{11,6}(k)$.\\

\noindent
Let $ C\subset\mathbb{P}^{10} $ be a smooth canonical curve with extra syzygies of rank $5k$ and
\begin{small}
\[
0\longleftarrow  S/I_{C} \longleftarrow S\xleftarrow{f} S(-2)^{36}\xleftarrow{\varphi_{1}} S(-3)^{160}\xleftarrow{\varphi_{2}}S(-4)^{315}
\xleftarrow{\varphi_{3}} \begin{matrix} 
S(-5)^{288}  \\ 
\oplus \\ 
 S(-6)^{5k} 
\end{matrix}
\xleftarrow{\varphi_{4}} \begin{matrix} 
S(-6)^{5k}\\
\oplus\\
S(-7)^{288} \end{matrix} 
\]
\end{small}
be the part of a minimal free resolution of $ C $, where $ S=\mathbb{K}[x_0,\ldots,x_{10}] $ is the coordinate ring of $ \mathbb{P}^{10}$, and $ f=(f_1,\ldots,f_{36}) $ is the minimal set of generators of the ideal $ I_{C} \subset S$. Consider the pullback to $C$ of the Euler sequence 
\begin{equation}
\label{euler}
 0\longrightarrow \mathcal{O}_{C}\longrightarrow \mathcal{O}_{C}(1)^{\oplus g}\longrightarrow T_{\mathbb{P}^{10}}\vert_{C}\longrightarrow 0. 
\end{equation}
From the long exact sequence of cohomologies, the dual vector space $ \Hb^{1}(C,T_{\mathbb{P}^{10}}\vert_{C})^{\vee} $ can be identified with the kernel of the Petri map
$$ \mu_{0}:\Hb^{0}(C,L)\otimes \Hb^{0}(C,\omega_{C}\otimes L^{-1})\longrightarrow \Hb^{0}(C,\omega_{C})$$
where $ L=\mathcal{O}_{C}(1)$. Therefore, we get
$ \Hb^{1}(C,T_{\mathbb{P}^{10}}\vert_{C})=0$
and from that, the induced long exact sequence of the normal exact sequence
$$ 0\longrightarrow T_{C}\longrightarrow T_{\mathbb{P}^{10}}\vert_{C}\longrightarrow \mathcal{N}_{C/\mathbb{P}^{10}}\longrightarrow 0,$$
reduces to the following short exact sequence
\begin{equation}
\label{kodariraspencer}
 0 \longrightarrow \Hb^{0}(C,T_{\mathbb{P}^{10}}\vert_{C})\longrightarrow \Hb^{0}(C,\mathcal{N}_{C/\mathbb{P}^{10}})\xrightarrow{\kappa} \Hb^{1}(C,T_{C})\longrightarrow 0,
\end{equation} 
where $ \kappa $ is the so-called Kodaira-Spencer map. More precisely, here we realize $ \Hb^{1}(C,T_{C}) $ as the tangent space to the moduli space $ \mathcal{M}_{11} $ at the point corresponding to $ C $, and $ \kappa $ as the induced map between the tangent spaces from the natural map $ \mathcal{H}_{C/\mathbb{P}^{10}}\longrightarrow \mathcal{M}_{11} $. We observe that by Serre duality 
  $$\Hb^{1}(C,T_{C}) \cong \Hb^{0}(C,\omega_{C}^{\otimes 2})^{\vee}. $$
Since we assume that the curve is canonically embedded, the sheaf $ \omega_{C}^{\otimes 2} $ is just the twisted sheaf $ \mathcal{O}_C(2) $. Hence, the cohomology
group above will be given by the quotient $ S_2/(I_C)_2 $ and thus $  \hsm^{1}(C,T_{C})=30$. As $ I_C $ is minimally generated by $ 36 $ generators, we can identify a basis of $ \Hb^{1}(C,T_{C}) $ with columns of a matrix of size $ 36\times 30 $ with entries in $ S_2/(I_C)_2 $, introducing $ 30 $ free deformation parameters  $ b_0,\ldots, b_{29} $. Let $ \bar{f}=f+f^{(1)} $ be the general first order family perturbing $ f $ defined by the general element of $ \Hb^{1}(C,T_{C}) $ and let
$$ \bar{S}\xleftarrow{\bar{f}} \bar{S}(-2)^{36} $$
be the corresponding morphism, where 
$$ \bar{S}=\mathbb{K}[b_0,\ldots,b_{29}]/(b_0,\ldots,b_{29})^{2} \otimes_{\mathbb{K}} S.$$ 
To find a lift $ \bar{\varphi_{1}}=\varphi_{1}+\varphi_{1}^{(1)}$ of $ \varphi_{1} $, we apply the necessary condition $ \bar{f}\circ\bar{\varphi_{1}}\equiv 0 $ mod $ (b_0,\ldots,b_{29})^{2} $, and we solve for an unknown $ \varphi_{1}^{(1)} $ the equation:
$$ 0\equiv \bar{f}\circ\bar{\varphi_{1}}=(f+f^{(1)})(\varphi_{1}+\varphi_{1}^{(1)})=f\circ \varphi_{1}+(f\circ \varphi_{1}^{(1)}+f^{(1)}\circ \varphi_{1})\ \text{mod} \ (b_0,\ldots,b_{29})^{2}.$$
This leads to $ f\circ \varphi_{1}^{(1)}= -f^{(1)}\circ \varphi_{1} $, such that solving it for $ \varphi_{1}^{(1)} $ by matrix quotient gives the required perturbation of the first syzygy matrix $ \varphi_{1} $. Continuing through the remaining resolution maps, we can lift the entire resolution to first order in the same way.  In \cite{kensch}, an implementation of this algorithm is provided by the function \texttt{liftDeformationToFreeResolution}, which lifts a resolution to the first order deformed resolution.
\begin{theorem}
\label{tangentlocus}
Let $ 0\leq m\leq 4 $, and set $ k:=m+5 $. The stratum $ \mathcal{M}_{11,6}(k)\subset \mathcal{M}_{11} $ has an irreducible component $ H_k $ of expected dimension $ 30-k $. Moreover, at a general point $ P\in H_k $, $ \mathcal{M}_{11,6} $ is locally analytically a union of $ k $ smooth transversal branches. In other words, $ \mathcal{M}_{11,6} $ is a normal crossing divisor around the point $ P $.
\end{theorem}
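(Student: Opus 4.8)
The plan is to carry out, at a curve $C$ carrying exactly $k$ pencils, the first-order computation on the canonical free resolution that for $k=1$ yields the Hirschowitz--Ramanan identity $\mathcal{K}_{11}=5\mathcal{M}_{11,6}$, and then to read the local structure of $\mathcal{M}_{11,6}$ off its degeneracy-locus equation. Concretely, I would take for $C$ the normalization of a general plane curve from the construction of Section~\ref{sec:Planar model description} with $m=k-5$, so that $C$ is a smooth genus $11$ curve with exactly $k$ mutually independent type I pencils $G_1,\dots,G_k$ and, canonically embedded in $\mathbb{P}^{10}$, has the Betti table displayed above with $\beta_{5,6}=\beta_{4,6}=5k$. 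Using a $36\times 30$ matrix $T$ representing a basis of $\Hb^{1}(C,T_C)=T_C\mathcal{M}_{11}$ together with the $30$ deformation parameters $b_0,\dots,b_{29}$, lift the minimal free resolution of $C$ to its universal first-order deformed resolution over $\bar S=\mathbb{K}[b_0,\dots,b_{29}]/(b)^2\otimes_{\mathbb{K}}S$ (the procedure \texttt{liftDeformationToFreeResolution} of \cite{kensch}) and extract from the lifted $\varphi_4$ the $5k\times 5k$ block $M$ from the source summand $\bar S(-6)^{5k}$ to the target summand $\bar S(-6)^{5k}$. By minimality of the resolution of $C$ its constant part vanishes, $M\equiv 0\pmod{(b)}$, so $M=\epsilon M^{(1)}$ with $M^{(1)}$ a $5k\times 5k$ matrix of $\mathbb{K}$-linear forms in the $b_i$. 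The computational core, verified once on a random such $C$ with the \texttt{verifyAssertion} functions of \cite{kenesh}, is that after a constant change of bases $M^{(1)}$ is block diagonal of shape $\bigoplus_{j=1}^{k} l_j N_j$ with $N_j\in\mathrm{GL}_5(\mathbb{K})$ and $l_1,\dots,l_k$ linearly independent linear forms in the $b_i$; hence $\det M^{(1)}=c\, l_1^{5}\cdots l_k^{5}$ with $c\neq 0$, and the common zero locus $\{M^{(1)}=0\}$ of the entries of $M^{(1)}$ is the linear subspace $\bigcap_{j=1}^{k}\{l_j=0\}\subset T_C\mathcal{M}_{11}$, of dimension $30-k$ — the case $k=1$ being precisely the order-five vanishing of the $5\times 5$ Koszul block that accounts for the multiplicity $5$ in $\mathcal{K}_{11}=5\mathcal{M}_{11,6}$.

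The dimension statement is then formal. A first-order deformation of $C$ remaining in $\mathcal{M}_{11,6}(k)$ keeps all $k$ type I pencils, hence keeps $\beta_{5,6}=5k$, hence keeps $M=0$; so $T_C\mathcal{M}_{11,6}(k)\subseteq\{M^{(1)}=0\}$ and $\dim_C\mathcal{M}_{11,6}(k)\le 30-k$. For the reverse inequality I would invoke the Brill--Noether bound $\dim\mathcal{M}_{11,6}(k)\ge 3g-3+k\,\rho(11,6,1)=30-k$, which is moreover realized by the $(30-k)$-dimensional families of Section~\ref{sec:Planar model description} (Theorem~\ref{tangent}, after dividing the $(33-m)$-dimensional parameter space by $\pgl_3$). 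Combining, $\dim_C\mathcal{M}_{11,6}(k)=30-k=\dim T_C\mathcal{M}_{11,6}(k)$, so $\mathcal{M}_{11,6}(k)$ is smooth of dimension $30-k$ at $C$; in particular $C$ lies on a unique irreducible component $H_k$, of dimension exactly $30-k$, with $T_CH_k=\bigcap_j\{l_j=0\}$. Since $C$ was chosen general, the same holds at a general point $P\in H_k$.

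For the normal-crossing assertion, I would work in a neighbourhood of $C$ in $\mathcal{M}_{11}$ — or, more cleanly, on the Hilbert scheme $\mathcal{H}_{C/\mathbb{P}^{10}}$, which is smooth at $C$, then pass to the quotient by the $\pgl_{11}$-directions, on which $M$ vanishes identically. There the Koszul divisor $\mathcal{K}_{11}=\{[C']:\beta_{5,6}(C')\ne 0\}$ is the degeneracy locus of the map of free modules $M$, i.e.\ the divisor $\{\det M=0\}$, which by the first paragraph equals $5\sum_{j=1}^{k}\{l_j=0\}$ as a divisor. Since also $\mathcal{K}_{11}=5\mathcal{M}_{11,6}$ by Hirschowitz--Ramanan \cite{hirsch}, comparing cycles and using that $\mathcal{M}_{11,6}$ is reduced gives, near $C$,
\[ \mathcal{M}_{11,6}=\bigcup_{j=1}^{k}\{l_j=0\}=\{\,l_1\cdots l_k=0\,\}. \]
As $l_1,\dots,l_k$ are linearly independent linear forms this is a normal crossing divisor: $k$ smooth branches meeting transversally, all passing through $P$, with $H_k$ locally equal to their common intersection $\bigcap_j\{l_j=0\}$. (One may identify $\{l_j=0\}$ with the closure of the locus of curves carrying a deformation of the pencil $G_j$; that these exhaust the local branches of $\mathcal{M}_{11,6}$ at $C$ follows from properness of the universal $g^1_6$ over $\mathcal{M}_{11,6}$ and the fact that $C$ has exactly $k$ pencils, each of multiplicity one.)

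I expect the main obstacle to be the interface between the computation and the geometry. First, the factorization $\det M^{(1)}=c\, l_1^5\cdots l_k^5$ and the codimension-$k$ statement for $\{M^{(1)}=0\}$ are checked on a single random instance, so one must argue — by upper-semicontinuity of the rank of the relevant linearized map over the irreducible parameter family — that they persist at the general point of $H_k$. Second, one must justify that the local defining equation of $\mathcal{K}_{11}$ at $C$ really is $\det M$ up to a unit: this rests on the presentation of the Koszul divisor as the degeneracy locus of a morphism of vector bundles of equal rank over $\mathcal{M}_{11}$, on the classical normal form of such a morphism near a corank-$5k$ point, and on identifying the relevant connecting map with the first-order block $M$ extracted from the deformed resolution.
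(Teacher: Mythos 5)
Your proposal follows essentially the same route as the paper: lift the minimal free resolution of the canonical curve to first order over the $30$ deformation parameters, verify on an explicit example that the $5k\times 5k$ block $M$ satisfies $\det M = l_1^5\cdots l_k^5$ for $k$ independent linear forms (the paper even records the same block-diagonalization $B_i = A_iL_i$ in a remark), deduce $T_C\mathcal{M}_{11,6}(k)=V(l_1,\dots,l_k)$ together with the Brill--Noether/plane-curve lower bound to get a component of dimension $30-k$, and invoke Hirschowitz--Ramanan $\mathcal{K}_{11}=5\mathcal{M}_{11,6}$ to conclude the $k$ smooth transversal branches. The caveats you flag (single random instance plus semicontinuity, and identifying $\det M$ as the local equation of the Koszul divisor) are exactly the points the paper also leans on, so your write-up matches the paper's argument in substance.
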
 
\begin{proof}
Consider the natural commutative diagram
\begin{displaymath}
\xymatrix{
\mathcal{V}_{9}^{4,5,m} \ar[r]^{\psi}\ar[d]_\phi &  \mathcal{H}_{C/\mathbb{P}^{10}}\ar[d]\\
\mathcal{M}_{11,6}(k) \ar@{^{(}->}[r] & \mathcal{M}_{11}}
\end{displaymath}
where $\phi $ takes the plane curve to its canonical model forgetting the embedding. 
Let $ H_k\subset\mathcal{M}_{11,6}(k)$ be the irreducible component containing the image points of curves lying in an irreducible component $ H\subset \mathcal{V}_{9}^{4,5,m} $ with expected dimension (see Theorem \ref{tangent}). We show that $ H_k$ is of expected dimension.\\
Let $C\subset \mathbb{P}^{10}$ be a canonical curve with $\ell$ extra syzygies, and let $\mathcal{C} \to (T,0)$ be its Kuranishi family. Then, the Koszul divisor $\mathcal{K}_{11}$ can be computed locally in this family as follows. One extends the minimal free resolution of the coordinate ring over $\mathbb{K}[x_0,\ldots,x_{10}]$ to a resolution over
$\mathcal{O}_{T,0}[x_0, \ldots, x_{10}]$.
The resulting complex will have a  $\ell \times \ell$ square submatrix with entries in $\mathcal{O}_{T,0}$. The determinant of this matrix defines the Koszul divisor restricted to the Kuranishi family. Due to Hirschowitz and Ramanan \cite{hirsch}, this divisor coincides with $5$ times the Brill-Noether divisor $\mathcal{M}_{11,6}$, that is $ \mathcal{K}_{11}=5\mathcal{M}_{11,6} $. Thus, the determinant of the matrix is a fifth power. Here, we compute the first order terms of this matrix for specific curves in various strata. \\
Let $ C\in H_k $ be the canoical model of a plane curve $ \Gamma\in H $ and for the general first order deformation family of $ C$, let $ M $ denote the $ 5k\times 5k $ submatrix of $ \overline{\varphi}_{4} $ in the deformed free resolution with linear entries in free deformation parameters $ b_0,\ldots,b_{29} $. In a minimal free resolution of $C$, the matrix defining the map $S(-6)^{5k}\longleftarrow S(-6)^{5k}$ is zero, hence the condition $ M=0 $ determines the space of the first order deformations of $C$ with extra syzygies of rank $ 5k $. \\
By means of the implemented function \texttt{verfiyAssertion(2)} in \cite{kenesh}, we can compute an explicit single example which shows that for exactly $ k $ linearly independent linear forms 
\[ l_{1},\ldots,l_{k}\in  \mathbb{K}[b_0,\ldots,b_{29}],\]
we have 
\[ \det \ M=l_{1}^{5}\cdot \ldots\cdot l_{k}^{5}. \] 
As the entries of the matrix $M$ are linear combinations of the $k$ independent forms $l_{1},\ldots, l_{k}$, one has $M=0$ if and only if $l_{1}= \cdots=l_{k}=0$. Moreover, identifying $T_C\mathcal{M}_{11,6}(k)$ with the space of first order deformations of $C$ with $k$ pencils, $T_C\mathcal{M}_{11,6}(k)$ is a subset of the space of the first order deformations of $C$ with extra syzygies of rank $5k$. Thus, since $\dim T_C\mathcal{M}_{11,6}(k)\geq 30-k$, the tangent space $T_C\mathcal{M}_{11,6}(k) $ is the zero locus of these linear forms, and is of codimension exactly $ k $ inside $ T_{C}\mathcal{M}_{11} $. Hence, $ H_k $ is an irreducible component of expected dimension $ 25-m $. Moreover, the equality $ \mathcal{K}_{11}=5\mathcal{M}_{11,6} $ implies that
 $$T_C\mathcal{M}_{11,6}=V(\ell_1)\cup \ldots \cup V(\ell_k),$$
which proves $ \mathcal{M}_{11,6}  $ at the point $ C $ is locally analytically union of $ k $ smooth branches.
\end{proof}
\begin{remark}
With the notation as above, under a change of basis, we can turn the matrix $ M $ to a block (or even a diagonal) matrix 
\[
M'=\begin{pmatrix} 
 B_1 & 0& \ldots & 0\\
 0& B_2 & \ldots &0\\
 \vdots  & \vdots & \ddots & \vdots\\
0  & 0 &  \ldots& B_k
\end{pmatrix}
\]   
such that for $ i=1,\ldots, k $ the non-zero block is $ B_i=A_iL_i $,
where $ A_i $ is an invertible $ 5\times 5 $ matrix with constant entries and $ L_i $ is the diagonal matrix with diagonal entries equal to $ l_i $. In fact, for $ i=1,\ldots,k $, let $ X_i $ be the scroll swept out by the pencil $ g_i $ on $ C $. Let $ M_i=(MV_i)^{t} $ be the $ 5\times 5k $ matrix, where $ V_i $ is the constant matrix defining the last map $\varphi_{i}=S(-6)^{5}\longrightarrow S(-6)^{5k} $ in the injective morphism of chain complexes from the resolution of $ X_i $ to the linear strand of a minimal resolution of $ C $. Set $ W_i:=\ker M_i $ and for $ j\in \lbrace 1,\ldots, k\rbrace $, let $ \overline{W}_{j}$ be the intersection of the modules $ W_i $'s by omitting $ W_j $. Since the pencils are mutually independent so that the corresponding scrolls contribute independently to the rank of the module $ \bar{S}(-6)^{5k}$, one can see $ \rank W_i=5(k-1) $, and a basis of $ W_i $ can be identified by columns of a constant matrix of size $ 5k\times 5(k-1) $. Moreover, we have $ \rank \overline{W}_{j}=5 $ such that a basis of the module $\overline{W}_{1}\oplus\ldots\oplus\overline{W}_{k} $ determines a $ 5k\times 5k $ invertible constant matrix. Using this invertible matrix for changing the basis of the space $ \bar{S}(-6)^{5k} $ turns the matrix $ M $ to a block matrix as above. To speed up our computations, we have used this presentation of $ M $ to compute its determinant. 
\end{remark}
\begin{theorem}
\label{g310}
The locus $  \mathcal{M}_{11,10}^{3} $ of genus $ 11 $ curves with a $ g^3_{10} $ is an irreducible component of $  \mathcal{M}_{11,6}(20)$ with expected dimension $ 25 $.
\end{theorem}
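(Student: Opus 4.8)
The plan is the following. Since $\rho(11,10,3)=11-4(11-10+3)=-5$, the expected dimension of $\mathcal{M}^3_{11,10}$ inside $\mathcal{M}_{11}$ is $3\cdot 11-3+\rho=25$, so I must show that this Brill--Noether locus really has dimension $25$, that it is irreducible, that it is contained in $\mathcal{M}_{11,6}(20)$, and that it fills out a component of the latter. I begin with the inclusion $\mathcal{M}^3_{11,10}\subseteq\mathcal{M}_{11,6}(20)$. For a curve $C$ carrying a $g^3_{10}=|L|$ one has $h^0(L)-h^1(L)=0$ by Riemann--Roch and $h^0(L)=4$ for the general such $C$, so $\omega_C\otimes L^{-1}$ is again a $g^3_{10}$; in particular the general $C$ carries only finitely many $g^3_{10}$'s, which I would confirm on a random example. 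The model $\phi_L\colon C\hookrightarrow\mathbb{P}^3$ has degree $10$, and the quadrisecant formula for space curves evaluates to $\tfrac{(d-2)(d-3)^2(d-4)}{12}-\tfrac{g(d^2-7d+13-g)}{2}=196-176=20$ for $(d,g)=(10,11)$; for a general $C$ these twenty quadrisecant lines are distinct and honest, and projection away from each gives a base-point-free $g^1_6$. Using \texttt{random6gonalGenus11Curv20pencil} from \cite{kenesh} I would check that the resulting twenty pencils are pairwise distinct, mutually independent, of type I, that $C$ is exactly $6$-gonal, and that $C$ carries no further $g^1_6$, so $C\in\mathcal{M}_{11,6}(20)$ and $\beta_{5,6}=100$, in agreement with the conjectural Betti table.

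Next I would pin down the dimension and irreducibility of $\mathcal{M}^3_{11,10}$. Take the explicit $C_0$ arising from a degree $9$ plane curve with $5$ ordinary triple and $2$ ordinary double points in general position, reembed it by its $g^3_{10}$, and compute in \textit{Macaulay2} that $h^0(C_0,\mathcal{N}_{C_0/\mathbb{P}^3})=40$. Since $\chi(\mathcal{N}_{C_0/\mathbb{P}^3})=4d=40$ is simultaneously a lower bound for $\dim_{[C_0]}\mathcal{H}_{C_0/\mathbb{P}^3}$ while $h^0(\mathcal{N}_{C_0/\mathbb{P}^3})=40$ is an upper bound, the Hilbert scheme of degree $10$, genus $11$ curves in $\mathbb{P}^3$ is smooth of dimension $40$ at $[C_0]$ (equivalently $h^1(C_0,\mathcal{N}_{C_0/\mathbb{P}^3})=0$). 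Since $C_0$ has only finitely many $g^3_{10}$'s, each with $h^0=4$, the fibres of the map to $\mathcal{M}_{11}$ near $[C_0]$ are $\dim\pgl(3)=15$-dimensional, so $\mathcal{M}^3_{11,10}$ has a component of dimension $40-15=25$ through $[C_0]$; together with the Brill--Noether bound $\dim\geq 25$ this forces $\dim\mathcal{M}^3_{11,10}=25$. Irreducibility I would deduce from the irreducibility of the relevant component of the Hilbert scheme $\mathrm{Hilb}_{10,11}(\mathbb{P}^3)$ (equivalently of the parameter space of these $\mathbb{P}^3$-models), so that its image $\mathcal{M}^3_{11,10}$ in $\mathcal{M}_{11}$ is irreducible of dimension $25$.

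Finally I would show that $\mathcal{M}^3_{11,10}$ is a full component of $\mathcal{M}_{11,6}(20)$. At a general point $[C]\in\mathcal{M}^3_{11,10}$, run the deformed free resolution of Theorem \ref{tangentlocus} with $k=20$: writing $M$ for the $100\times 100$ block of $\overline{\varphi}_4$ with entries linear in the $30$ deformation parameters $b_0,\ldots,b_{29}$, the vanishing locus $M=0$ defines $T_{[C]}\mathcal{M}_{11,6}(20)\subseteq T_{[C]}\mathcal{M}_{11}$, and an explicit computation with the verification routines of \cite{kenesh} shows this tangent space has dimension $25$ — so that, in contrast with the cases $k\leq 9$, the linear forms dividing $\det M$ span only a $5$-dimensional space. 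Hence $\dim_{[C]}\mathcal{M}_{11,6}(20)\leq 25$, and combined with $\mathcal{M}^3_{11,10}\subseteq\mathcal{M}_{11,6}(20)$ of dimension $25$ from the previous step, this exhibits the irreducible locus $\mathcal{M}^3_{11,10}$ as an irreducible component of $\mathcal{M}_{11,6}(20)$.

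The main obstacle is the dimension statement: because $\rho<0$, excess in the Brill--Noether locus is not ruled out a priori, and I expect the decisive input to be the single vanishing $h^1(C_0,\mathcal{N}_{C_0/\mathbb{P}^3})=0$ — smoothness of the Hilbert scheme at one well-chosen point — together with the irreducibility of that Hilbert scheme component; the remaining points (the count of twenty distinct quadrisecant pencils and their independence and type I, and the tangent space dimension $25$ for $\mathcal{M}_{11,6}(20)$) reduce to \textit{Macaulay2} verifications in the spirit of the rest of the paper.
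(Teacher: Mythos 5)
Your proposal is correct in substance, and its decisive step is the same as the paper's: an explicit first-order computation (via the deformed free resolution and the $100\times 100$ matrix $M$) at a single curve, showing that $T_C\mathcal{M}_{11,6}(20)$ is cut out by only five independent linear forms in the $30$ deformation parameters and hence has dimension $25$ --- exactly what \texttt{verfiyAssertion(6)} in \cite{kenesh} certifies. Where you diverge is in the supporting steps. The paper's one-line proof extracts everything from the single verification $T_C\mathcal{M}_{11,10}^{3}=T_C\mathcal{M}_{11,6}(20)=V(l_1,\ldots,l_5)$: combined with the Brill--Noether lower bound $\dim\mathcal{M}^3_{11,10}\geq 3g-3+\rho=25$, the equality of tangent spaces at once gives smoothness of dimension $25$ at $C$ and the component statement, with no need to discuss the space model at all. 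You instead obtain the containment $\mathcal{M}^3_{11,10}\subseteq\mathcal{M}_{11,6}(20)$ from Cayley's quadrisecant count ($196-176=20$ four-secant lines of the degree-$10$ model, each projecting to a $g^1_6$, checked to be independent and of type I on a random example), and the dimension from smoothness of the Hilbert scheme of space curves at one point ($h^0(\mathcal{N}_{C_0/\mathbb{P}^3})=\chi(\mathcal{N}_{C_0/\mathbb{P}^3})=40$, minus $15$ for $\pgl(3)$ and the finitely many $g^3_{10}$'s). This buys a more geometric explanation of why the dimension is $25$, independent of the syzygy machinery, at the price of two additional machine verifications; the paper's route is shorter because the same matrix computation delivers both tangent spaces simultaneously. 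One caveat: your appeal to ``irreducibility of the relevant component of the Hilbert scheme'' to conclude irreducibility of the whole locus $\mathcal{M}^3_{11,10}$ is either circular (a component is irreducible by definition) or invokes an irreducibility result for $\mathrm{Hilb}_{10,11}(\mathbb{P}^3)$ that is not available in this range ($d<g+3$, so Ein's theorem does not apply); the paper is equally silent on global irreducibility, and for the conclusion actually needed --- that the closure of the $25$-dimensional locus through $C$ is an irreducible component of $\mathcal{M}_{11,6}(20)$ --- the local tangent-space argument that both you and the paper use already suffices.
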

\begin{proof}
With the same arument as above, the theorem follows from computation of an explicit example (see \texttt{verfiyAssertion(6)} in \cite{kenesh}) which proves for five linearly independent linear forms $  l_{1},\ldots,l_{5} $ we have
 $$ T_C\mathcal{M}_{11,10}^{3}=T_C \mathcal{M}_{11,6}(20)=V(l_1,\ldots,l_5).$$
\end{proof}
\section{Unirational irreducible components}\label{sec:injectivity}
In this section, we prove that the so-constructed rational families of plane curves dominate an irreducible component of the strata $ \mathcal{M}_{11,6}(k) $ for $ k=5,\ldots,10 $. To this end, we count the number of moduli for these families, by computing the rank of the differential map between the tangent spaces.
\begin{theorem}
\label{degree9}
For $ 5\leq k\leq9 $, the stratum $ \mathcal{M}_{11,6}(k)$ has a  unirational irreducible component of expected dimension $ 30-k $. A general curve lying on this component arises from a degree $ 9 $ plane model with $ 4 $ ordinary triple and $ 5 $ ordinary double points which contains $ k-5 $ points among the ninth fixed point of the pencil of cubics passing through the $ 4 $ triple and $ 4 $ chosen double points.
\end{theorem}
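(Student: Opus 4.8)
The plan is to realize the component $H_k\subseteq\mathcal{M}_{11,6}(k)$ as the image of the rational family of plane curves of section \ref{sec:Planar model description}. Write $k=m+5$ with $0\le m\le 4$. By Theorem \ref{tangent} the variety $\mathcal{V}_9^{4,5,m}$ has an irreducible component $H$ of expected dimension $33-m$, and $H$ is rational: after fixing the nine marked points in general position, the degree-$9$ plane curves with ordinary triple points at $P_1,\dots,P_4$, ordinary double points at $Q_1,\dots,Q_5$ and passing through $m$ of the ninth points $R_j$ (which depend algebraically on the chosen points) form a linear system of projective dimension $\binom{11}{2}-1-24-15-m=15-m$, so $H$ is birational to a $\mathbb{P}^{15-m}$-bundle over an open subset of $(\mathbb{P}^2)^9$. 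By the construction of section \ref{sec:Planar model description}, the normalization of a general member of $H$ is a smooth genus-$11$ curve with exactly $k$ mutually independent type I pencils, so the canonical-model map $\phi\colon H\dashrightarrow\mathcal{M}_{11,6}(k)$, sending a plane curve to the isomorphism class of its normalization, is defined on a dense open set, and by construction $H_k$ is the component of $\mathcal{M}_{11,6}(k)$ containing $\phi(H)$. Since $\dim H-\dim\pgl(2)=(33-m)-8=25-m=30-k=\dim H_k$ by Theorem \ref{tangentlocus}, it is enough to prove that $\phi|_H$ is dominant.

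To prove dominance I would compute the differential of $\phi$ at a general $\Gamma\in H$. Using the commutative square of Theorem \ref{tangentlocus}, $\phi$ factors through the Hilbert scheme of canonical curves, so $d\phi_\Gamma$ is the composite $T_\Gamma H\xrightarrow{d\psi}\Hb^0(C,\mathcal{N}_{C/\mathbb{P}^{10}})\xrightarrow{\kappa}\Hb^1(C,T_C)$, where $C\subset\mathbb{P}^{10}$ is the canonical model of the normalization and $\kappa$ is the Kodaira--Spencer map of (\ref{kodariraspencer}), whose kernel $\Hb^0(C,T_{\mathbb{P}^{10}}|_C)$ has dimension $120$. The computation is framed by two bounds. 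First, $\pgl(2)$ acts on $\mathcal{V}_9^{4,5,m}$ by moving the curve and the nine points simultaneously (the incidences with the $R_j$ being $\pgl(2)$-equivariant) and $\phi$ is constant on its orbits; a general $\Gamma$ has finite stabilizer, so $\ker d\phi_\Gamma$ contains the $8$-dimensional tangent space to $\pgl(2)\cdot\Gamma$ and $\rank d\phi_\Gamma\le 25-m$. Second, $\phi(H)\subseteq\mathcal{M}_{11,6}(k)$ forces $\operatorname{im}d\phi_\Gamma\subseteq T_C\mathcal{M}_{11,6}(k)=V(l_1,\dots,l_k)$, a space of dimension $30-k=25-m$ by Theorem \ref{tangentlocus}. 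Hence it suffices to exhibit a single $\Gamma$ with $\rank d\phi_\Gamma\ge 25-m$: there $d\phi_\Gamma$ is an isomorphism onto $T_CH_k$, so $\overline{\phi(H)}=H_k$, lower semicontinuity of the rank shows the general fibre of $\phi|_H$ is exactly the $\pgl(2)$-orbit, and the stated description of a general curve of $H_k$ is precisely the planar construction of section \ref{sec:Planar model description}.

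The required rank I would obtain from an explicit computation over a finite field, in the spirit of the proofs of Theorems \ref{tangent} and \ref{tangentlocus}: choose a random $\Gamma\in H$, form the canonical model $C$ and its minimal free resolution; use Theorem \ref{tangent} to get a basis of $T_\Gamma H$ as equisingular first-order deformations of the plane equation together with the induced motion of the nine points; transport each of these through the canonical map to a first-order perturbation $f_C+\epsilon f_C^{(1)}$ of the $36$ generators of $I_C$, that is, to a class in $\Hb^0(C,\mathcal{N}_{C/\mathbb{P}^{10}})$; apply $\kappa$ --- equivalently, reduce modulo the perturbations coming from $\Hb^0(C,T_{\mathbb{P}^{10}}|_C)$ --- to land in $\Hb^1(C,T_C)\cong (S_2/(I_C)_2)^{\vee}$, written in the $30$ deformation parameters $b_0,\dots,b_{29}$; finally row-reduce the resulting matrix and check that its rank is $25-m$ and that its row span equals $V(l_1,\dots,l_k)$. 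This is what the function \texttt{verifyAssertion} in \cite{kenesh}, based on the resolution-lifting routines of \cite{kensch}, carries out. Lower semicontinuity of the rank and the usual spreading-out argument transfer the conclusion from positive characteristic to characteristic $0$, and we conclude that $H_k$ is unirational of dimension $30-k$.

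The main obstacle is making the differential $d\phi$ genuinely computable, namely propagating an equisingular first-order deformation of the plane curve, together with the moving singular points, all the way to a first-order deformation of the $36$ generators of the canonical ideal and then to its Kodaira--Spencer class in $\Hb^1(C,T_C)$; this rests on the resolution-lifting procedure of \cite{kensch} (as in the proof of Theorem \ref{tangentlocus}) and on careful use of the normal-sheaf and Euler-sequence identifications of section \ref{sec:familiesofcurves} and (\ref{kodariraspencer}). A secondary point to verify is that the randomly chosen example actually lies on the expected-dimension component $H$ and has finite $\pgl(2)$-stabilizer, so that both framing bounds above are sharp; once these are in place the argument reduces to linear algebra and semicontinuity.
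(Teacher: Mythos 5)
Your proposal follows essentially the same route as the paper: it combines Theorems \ref{tangent} and \ref{tangentlocus} with a rank computation for the differential $d\phi_\Gamma$ through the Kodaira--Spencer sequence (\ref{kodariraspencer}), verified on an explicit random example via the \texttt{verifyAssertion} routines, with the $8$-dimensional kernel accounted for by the $\pgl(2)$-action, exactly as in the paper's proof (which phrases it as the image of $d\psi_\Gamma$ meeting $\Hb^{0}(C,T_{\mathbb{P}^{10}}\vert_C)$ in dimension $8$, and reduces to the case $k=5$ since $T_\Gamma\mathcal{V}_9^{4,5,m}\subseteq T_\Gamma\mathcal{V}_9^{4,5}$). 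Your write-up additionally makes explicit two points the paper leaves implicit, namely the rationality of $H$ as a linear-system bundle over $(\mathbb{P}^2)^9$ and the semicontinuity/spreading-out step from a finite-field example to characteristic zero, but the argument is the same.
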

\begin{proof}
With notations as in Theorem \ref{tangentlocus}, let $ \phi_{\vert H}:H\longrightarrow H_{k} $ be the natural map between the irreducible components of expected dimensions. To compute the dimension of $ \overline{\phi(H)} $, one has to compute the rank of the differential map 
$$ d\phi_{\Gamma}:T_{\Gamma}H\longrightarrow T_{C}H_{k},$$
at a smooth point $ C\in \phi(H)$. We recall that for $ m>0 $ the tangent space to $ \mathcal{V}_{9}^{4,5,m} $ at a point $ \Gamma $ is a subspace of $T_{\Gamma} \mathcal{V}_{9}^{4,5} $. Therefore, it suffices to show that $ \dim( \ker \ d\phi_{\Gamma})=8 $ for the case $ k=5 $. Considering the following commutative diagram of tangent maps
\begin{displaymath}
\xymatrix{
& &T_{\Gamma}H \ar[d]_{d\psi_{\Gamma}}\ar[r]^{d\phi_{\Gamma}}& T_{C}H_{k} \ar@{^{(}->}[d]\\
0 \ar[r]& \Hb^{0}(C,T_{\mathbb{P}^{10}}\vert_{C}) \ar[r] & \Hb^{0}(C,\mathcal{N}_{C/\mathbb{P}^{10}})\ar[r]&  \Hb^{1}(C,T_{C}) \ar[r]& 0
}
\end{displaymath}
our explicit computation of a single example (see \texttt{VerfiyAssertion(3)} in \cite{kenesh}) shows that the image of the map $ d\psi_{\Gamma} $ has exactly $ 8- $dimensional intersection with the image of $ \Hb^{0}(C,T_{\mathbb{P}^{10}}\vert_{C}) $ inside $ \Hb^{0}(C,\mathcal{N}_{C/\mathbb{P}^{10}}) $, which corresponds to the automorphisms of the projective plane. Therefore, the rational family of plane curves lying on the irreducible component $ H $ dominates an irreducible component of $ \mathcal{M}_{11,6}(k) $ with expected dimension.   
\end{proof}
\begin{theorem}
\label{degree10}
The stratum $ \mathcal{M}_{11,6}(10) $ has a unirational irreducible component of excess dimension $26$, where the curves arise from degree $ 8 $ plane models with $ 10 $ ordinary double points. More precisely, the locus $  \mathcal{M}_{11,8}^{2} $ of curves possessing a linear system $ g^{2}_{8} $ is a unirational irreducible component of $ \mathcal{M}_{11,6}(10) $ of expected dimension $ 26 $.
\end{theorem}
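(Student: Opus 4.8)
The plan is to mirror the proofs of Theorems~\ref{tangentlocus} and~\ref{degree9}, replacing plane nonics by plane octics. First I would introduce the parameter variety $\mathcal{V}_{8}^{10}\subset\mathbb{P}^{44}\times(\mathbb{P}^{2})^{10}$ of pairs $(\Gamma;P_{1},\ldots,P_{10})$ with $\Gamma$ a degree-$8$ plane curve having ordinary nodes at the $P_{i}$. Since octics vanishing doubly at $10$ general points form a linear system of dimension $44-30=14$, this realizes $\mathcal{V}_{8}^{10}$ as a $\mathbb{P}^{14}$-bundle over an open subset of $(\mathbb{P}^{2})^{10}$, hence as a rational variety of dimension $34$, and it is irreducible since Severi varieties of nodal plane curves are. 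Blowing up the ten nodes, the strict transform is a smooth curve $C\sim 8H-\sum_{i=1}^{10}2E_{i}$ of genus $\binom{7}{2}-10=11$. As in Section~\ref{sec:Planar model description}, $|H|$ restricts to a $g^{2}_{8}$ on $C$, and for each $i$ the pencil $|H-E_{i}|$ restricts to a base-point-free $g^{1}_{6}$, say $g_{i}$, with $K_{C}-2g_{i}=\bigl(3H+2E_{i}-\sum_{j}E_{j}\bigr)\big|_{C}$; counting the cubics through the nine points $P_{j}$, $j\neq i$, gives $h^{0}(K_{C}-2g_{i})=1$, so $\dim|2g_{i}|=2$ and $g_{i}$ is of type~I, and for general $\Gamma$ the ten pencils are pairwise independent. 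A Betti-number computation (as implemented in~\cite{kenesh}) confirms $\beta_{5,6}=50$ for the canonical model, so $C$ carries exactly $k=10$ pencils; hence $C\in\mathcal{M}_{11,6}(10)\cap\mathcal{M}^{2}_{11,8}$. Conversely a general curve with a birational $g^{2}_{8}$ is the normalization of a plane octic whose only singularities are $10$ nodes, so $\mathcal{M}^{2}_{11,8}=\overline{\phi(\mathcal{V}_{8}^{10})}$ is irreducible and unirational (dominated by a rational variety).

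Next I would count moduli exactly as in the proof of Theorem~\ref{degree9}. With the maps $\psi\colon\mathcal{V}_{8}^{10}\to\mathcal{H}_{C/\mathbb{P}^{10}}$ and $\phi\colon\mathcal{V}_{8}^{10}\to\mathcal{M}^{2}_{11,8}$, one computes $\ker d\phi_{\Gamma}$ at a random $\Gamma$ through the commutative diagram built from the Kodaira--Spencer sequence~(\ref{kodariraspencer}). The expected outcome, to be checked on a single explicit example by the function \texttt{verifyAssertion} in~\cite{kenesh}, is that $d\psi_{\Gamma}$ is injective and $\im d\psi_{\Gamma}$ meets the subspace $\Hb^{0}(C,T_{\mathbb{P}^{10}}|_{C})$ of~(\ref{kodariraspencer}) in exactly the $8$-dimensional image of $\pgl(2)$ — equivalently, the general curve in this family has a unique $g^{2}_{8}$, so its plane model is determined up to projective equivalence. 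Hence $\dim\overline{\phi(\mathcal{V}_{8}^{10})}=34-8=26=\dim\mathcal{M}_{11}+\rho(11,8,2)$, and $\mathcal{M}^{2}_{11,8}$ is unirational of dimension $26$.

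The crucial step — and the main obstacle — is to show that this $26$-dimensional locus is \emph{already} an irreducible component of $\mathcal{M}_{11,6}(10)$; note that the pattern $\det M=l_{1}^{5}\cdots l_{k}^{5}$ of Theorem~\ref{tangentlocus} must fail here, for it would force codimension $10$ and dimension only $20$. For a general $C\in\mathcal{M}^{2}_{11,8}$ I would lift the minimal free resolution of $C$ to the general first-order deformation over $\bar{S}$ and extract the $50\times 50$ block $M$ of $\overline{\varphi}_{4}$ carrying the extra linear syzygies; using the scrolls $X_{1},\ldots,X_{10}$ swept out by $g_{1},\ldots,g_{10}$ one brings $M$ to block-diagonal form with $5\times 5$ blocks $l_{i}A_{i}$, $A_{i}$ invertible constant, so that the linear locus $\{M=0\}=V(l_{1},\ldots,l_{10})$ is the tangent space $T_{C}\mathcal{M}_{11,6}(10)$. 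An explicit computation (again \texttt{verifyAssertion} in~\cite{kenesh}) then shows that the ten linear forms $l_{i}\in\mathbb{K}[b_{0},\ldots,b_{29}]$ span only a $4$-dimensional subspace — the ten conditions ``$g_{i}$ deforms with $C$'' become redundant once $C$ carries a $g^{2}_{8}$ — so $\dim T_{C}\mathcal{M}_{11,6}(10)=26$. Combining this with $\mathcal{M}^{2}_{11,8}\subseteq\mathcal{M}_{11,6}(10)$ and $\dim_{C}\mathcal{M}^{2}_{11,8}=26$ gives $26=\dim_{C}\mathcal{M}^{2}_{11,8}\le\dim_{C}\mathcal{M}_{11,6}(10)\le\dim T_{C}\mathcal{M}_{11,6}(10)=26$, so the two loci coincide near $C$ and $\mathcal{M}^{2}_{11,8}$ is an irreducible component of $\mathcal{M}_{11,6}(10)$ — one on which both it and $\mathcal{M}_{11,6}(10)$ are smooth of the expected dimension $26$ — and since these are the values at a general point of the irreducible locus $\mathcal{M}^{2}_{11,8}$, the single example settles the claim. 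Concretely, the hard part is performing the full first-order lift of the resolution of a genus-$11$ canonical curve and handling the resulting $50\times 50$ matrix over $\mathbb{K}[b_{0},\ldots,b_{29}]/(b_{0},\ldots,b_{29})^{2}$ in a form Macaulay2 can process, and identifying the redundancy among the ten syzygy conditions forced by the $g^{2}_{8}$, which is exactly what makes the dimension jump from the naive $20$ to the excess value $26$.
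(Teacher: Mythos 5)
Your proposal is correct and follows essentially the same route as the paper: the paper likewise takes the $34$-dimensional Severi variety $\mathcal{V}_{8}^{10}$ of $10$-nodal octics, passes to the normalization $C\in\mathcal{M}_{11,8}^{2}\subset\mathcal{M}_{11,6}(10)$, and, exactly as in Theorems \ref{tangentlocus} and \ref{degree9}, verifies on an explicit example (\texttt{verifyAssertion(4)} in \cite{kenesh}) that $\dim T_{C}\mathcal{M}_{11,6}(10)=\dim V(l_{1},\ldots,l_{10})=26$ while the induced differential has full rank $26$, which is precisely your sandwich argument. The only slight imprecision is your remark that the pattern $\det M=l_{1}^{5}\cdots l_{10}^{5}$ ``must fail'': it is only the linear independence of the $l_{i}$ that fails (they span a $4$-dimensional space), as your own block-diagonal description later makes clear.
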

\begin{proof}
Let $ \mathcal{V}_{8}^{10} $ be the Severi variety of degree $ 8 $ plane curves with $ 10 $ ordinary double points. By classical results \cite{zariskiharris}, it is known that $ \mathcal{V}_{8}^{10} $ is smooth at each point and of pure dimension $ 34 $. Let $ \Gamma $ be a plane curve of degree $ 8 $ with $ 10 $ ordinary double points, and let $ C\in \mathcal{M}_{11,8}^{2}\subset \mathcal{M}_{11,6}(10) $ be its normalization. With the same argument as in the proof of \ref{tangentlocus} and \ref{degree9}, the theorem follows from the computation of an example which shows that for linear forms $ l_1,\ldots,l_{10} $ we have $ \dim T_C\mathcal{M}_{11}(10)=\dim V(l_1,\ldots,l_{10})=26 $ and furthermore the induced differential map is of full rank $ 26 $. The verification of this statement is implemented in the function \texttt{verifyAssertion(4)} in \cite{kenesh}.
\end{proof}
\begin{corollary}
Let $ \Gamma $ be a general plane curve of degree $ 8 $ with $ 10 $ ordinary double points, and let $ C\in \mathcal{M}_{11} $ be its normalization. Consider a deformation of $ C $ which preserves at least four pencils $ g^{1}_{6}$'s of the $ 10 $ existing pencils. Then, the deformation of $ C $ preserves the $ g^{2}_{8} $. In other words, a deformation of $ C $ which keeps at least four pencils $ g^{1}_{6}$'s lies still on the locus $ \mathcal{M}_{11,8}^{2} $.
\end{corollary}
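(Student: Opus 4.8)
The plan is to identify the locus of deformations of $C$ preserving four of the ten pencils with $\mathcal{M}_{11,8}^{2}$ near $C$, using the linear-algebra normal form already available. Write $T:=\Hb^{1}(C,T_{C})$ for the $30$-dimensional tangent space to $\mathcal{M}_{11}$ at $C$, and let $l_{1},\dots,l_{10}\in T^{\vee}$ be the ten linear forms produced in the proof of Theorem \ref{degree10}, so that for $C$ general $T_{C}\mathcal{M}_{11,8}^{2}=T_{C}\mathcal{M}_{11,6}(10)=V(l_{1},\dots,l_{10})$ is $26$-dimensional; equivalently, $l_{1},\dots,l_{10}$ span a $4$-dimensional subspace of $T^{\vee}$. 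By the block decomposition of the Remark following Theorem \ref{tangentlocus}, which applies verbatim for $k=10$, after a constant change of basis the $50\times 50$ submatrix $M$ of $\overline{\varphi}_{4}$ controlling the extra syzygies is block-diagonal with blocks $B_{i}=l_{i}A_{i}$, where $A_{i}\in\mathrm{GL}_{5}(\mathbb{K})$ is constant. Consequently a first-order deformation $\xi\in T$ keeps the scroll of the pencil $g_{i}$ --- equivalently the rank-$5$ contribution of $g_{i}$ to $\beta_{5,6}$, equivalently the pencil $g_{i}$ itself --- if and only if $l_{i}(\xi)=0$; in other words the first-order deformations of $C$ preserving $g_{i}$ form exactly the hyperplane $V(l_{i})\subset T$.

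The one additional ingredient I need is that the ten forms $l_{1},\dots,l_{10}$ are in general position, i.e.\ any four of them are linearly independent --- equivalently, no four of the corresponding ten points of $\mathbb{P}(\langle l_{1},\dots,l_{10}\rangle)\cong\mathbb{P}^{3}$ are coplanar. This is an open condition on the point of the Severi variety $\mathcal{V}_{8}^{10}$, so by semicontinuity it is enough to verify it on a single curve, and this can be read off the very example already computed for Theorem \ref{degree10} (cf.\ \texttt{verifyAssertion(4)}). Heuristically it is forced by Harris's theorem that the monodromy of $\mathcal{V}_{8}^{10}$ acts as the full symmetric group on the ten nodes of a general plane octic, so the configuration $\{l_{1},\dots,l_{10}\}$ is as symmetric as it can be.

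With these in hand, fix four of the ten pencils, say $g_{i_{1}},\dots,g_{i_{4}}$, and let $\mathcal{W}$ be the germ at $C$ in $\mathcal{M}_{11}$ of the locus of deformations of $C$ along which all four persist (the scheme-theoretic intersection of the four Brill--Noether loci where $g_{i_{1}},\dots,g_{i_{4}}$ respectively persist); any deformation of $C$ keeping these four pencils factors through $\mathcal{W}$. By the first paragraph each $T_{C}$ of these four loci is contained in the corresponding $V(l_{i_{j}})$, hence $T_{C}\mathcal{W}\subseteq V(l_{i_{1}},\dots,l_{i_{4}})$, which is $26$-dimensional by the general-position statement. On the other hand $\mathcal{M}_{11,8}^{2}\subseteq\mathcal{W}$ near $C$, since the ten pencils --- cut out by the pencils of lines through the ten nodes of the plane octic model --- all deform together with the $g^{2}_{8}$ over $\mathcal{M}_{11,8}^{2}$, and $\mathcal{M}_{11,8}^{2}$ is smooth at $C$ of dimension $26$ by Theorem \ref{degree10}. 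Therefore $26=\dim\mathcal{M}_{11,8}^{2}\le\dim T_{C}\mathcal{W}\le 26$, so $\mathcal{W}$ is smooth at $C$ of dimension $26$ and contains the $26$-dimensional smooth irreducible germ $\mathcal{M}_{11,8}^{2}$; two such germs must coincide. Hence $\mathcal{W}=\mathcal{M}_{11,8}^{2}$ near $C$, so every deformation of $C$ that keeps at least four of the ten pencils lies on $\mathcal{M}_{11,8}^{2}$ and therefore preserves the $g^{2}_{8}$. The main obstacle is the general-position claim of the second paragraph: it is precisely what makes ``four'' sufficient (and, since $V(l_{i_{1}},l_{i_{2}},l_{i_{3}})$ is $27$-dimensional, sharp), together with the slightly delicate point in the first paragraph that infinitesimal persistence of a pencil $g_{i}$ is equivalent to the vanishing of the block $B_{i}$ --- which rests on the structural fact that for a general such $C$ the rank-$5$ piece of $\beta_{5,6}$ attached to $g_{i}$ can be neither created nor destroyed to first order except through $B_{i}$.
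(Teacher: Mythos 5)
Your proposal is correct and takes essentially the same route as the paper: the paper's proof likewise identifies the ten local branches of $\mathcal{M}_{11,6}$ (with tangent hyperplanes $V(l_1),\ldots,V(l_{10})$) with persistence of the ten pencils, asserts that any four of these forms are linearly independent (on the basis of the same explicit computation behind Theorem \ref{degree10} that you invoke), and concludes that the intersection of any four branches is of codimension $4$ and hence coincides locally with the $26$-dimensional locus $\mathcal{M}_{11,8}^{2}$. Your write-up only adds an explicit germ/dimension comparison and flags the general-position claim (with the monodromy heuristic) that the paper states without further comment.
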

\begin{proof}
By the above theorem, around a general point $ C\in \mathcal{M}_{11,8}^{2}$, the Brill--Noether divisor $ \mathcal{M}_{11,6} $ is locally a union of $ 10 $ branches defined by $ l_1\cdot \ldots \cdot l_{10}=0$. On the other hand, $ \codim T_C \mathcal{M}_{11,8}^{2}=\codim V (l_1,\ldots, l_{10})=4 $, such that any four of the linear forms are independent defining $ \mathcal{M}_{11,8}^{2} $ locally around $ C $. Therefore, a deformation of $ C $ which keeps at least four of $ g^{1}_{6}$'s lies still on the locus $ \mathcal{M}_{11,8}^{2} $. 
\end{proof}
\section{Further components}
\label{sec:Further components}
Having already described an irreducible unirational component of the strata $ \mathcal{M}_{11,6}(k) $ for $ k=5,\ldots,10 $, the first natural question is to ask about the irreducibility of these strata. If the answer is negative, then the question is how the other irreducible components arise. \\

Although one may mimic our pattern to find model of plane curves of higher degree with singular points of higher multiplicity, considering the degree $ 9 $ plane curves with $ 4 $ ordinary triple and $ 5 $ ordinary double points as our original model, our simple computations indicates that the models of higher degree are usually a Cremona transformation of this model with respect to three singular points. Therefore, considering models of different degrees and singularities, we have not found new elements in these strata. On the other hand, the study of syzygy schemes of curves lying on these strata leads to the following theorem which states the existence of further irreducible components.\\

\begin{theorem}
\label{othercomponent}
For $ 5\leq k\leq 8 $, the stratum $  \mathcal{M}_{11,6}(k) $ has at least two irreducible components both of expected dimension, along which $\mathcal{M}_{11,6} $ is generically a simple normal crossing divisor.
\end{theorem}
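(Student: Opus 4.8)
\textbf{Proof proposal for Theorem \ref{othercomponent}.}

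The plan is to exhibit, for each $k$ with $5\leq k\leq 8$, a second family of curves carrying $k$ mutually independent type I pencils that is distinct from the degree $9$ plane model of Theorem \ref{degree9}, and then to show — by the same tangent space and determinant-of-$M$ machinery developed in Theorems \ref{tangentlocus} and \ref{degree9} — that this family dominates an irreducible component of $\mathcal{M}_{11,6}(k)$ of the same expected dimension $30-k$, along which $\mathcal{M}_{11,6}$ is again a normal crossing divisor. Concretely, I would look for the second family among curves whose \emph{syzygy scheme} has a different structure: for a curve $C$ with $\beta_{5,6}=5k$, each type I pencil $g_i$ produces a rational normal scroll $X_i\supset C$ cut out by the linear strand, and the $k$ scrolls $X_1,\dots,X_k$ determine the syzygy scheme $\mathrm{Syz}(C)$. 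On the component $H_k$ from Theorem \ref{degree9} the four pencils coming from lines through the triple points and the pencil of conics through the four triple points sit in a special configuration (four of the scrolls share a common ruling structure induced by the blow-down), whereas on a generic second family the $k$ scrolls are in ``general position'' relative to one another. I would identify the second family by a direct construction — e.g. a suitable complete intersection or determinantal model in a product of projective spaces, or a plane model of different degree that is \emph{not} a Cremona transform of the degree $9$ model — guided by the Macaulay2 experiments already referenced in the paper.

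The key steps, in order, are: (1) produce an explicit random curve $C'$ in the candidate second family and verify computationally that $\beta_{5,6}(C')=5k$, so $C'\in\mathcal{M}_{11,6}(k)$; (2) run the deformation-to-first-order algorithm (\texttt{liftDeformationToFreeResolution}) to obtain the $5k\times 5k$ matrix $M$ attached to $C'$, and check that $\det M = l_1^5\cdots l_k^5$ for $k$ independent linear forms $l_1,\dots,l_k$ in the $30$ deformation parameters — this shows, exactly as in Theorem \ref{tangentlocus}, that $T_{C'}\mathcal{M}_{11,6}(k)=V(l_1,\dots,l_k)$ has codimension $k$, hence the component through $C'$ has expected dimension $30-k$, and that $\mathcal{M}_{11,6}$ is a simple normal crossing divisor there (using Hirschowitz--Ramanan, $\mathcal{K}_{11}=5\mathcal{M}_{11,6}$); (3) verify, by computing the rank of the Kodaira--Spencer / differential map for the parametrizing family of $C'$ (analogous to \texttt{VerfiyAssertion(3)}), that this family actually dominates the component — or at least that the component it dominates is of dimension $30-k$; (4) prove that the new component is \emph{distinct} from $H_k$. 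For step (4) the cleanest route is to exhibit a discrete invariant that separates the two: I would compare the syzygy schemes, or more simply the configuration of the $k$ scrolls $X_i$ — on $H_k$ the pencils satisfy the relation that $g_1+g_2+g_3+g_4 \sim g_5 + (\text{effective})$ or a similar linear equivalence forced by $9H = 3(H-E_{P_1})+\cdots$, while on the second family no such relation holds — and then argue that this relation is a closed condition that cannot be destroyed by deformation within a single component.

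The main obstacle I expect is step (4): showing genuine distinctness rather than just producing two presentations of the same component. Two unirational families of the same dimension with the same local structure along $\mathcal{M}_{11,6}$ could a priori be dense in the same component, so I need a deformation-invariant that provably differs. The safest approach is a Brill--Noether / linear-series invariant: determine which \emph{additional} special linear systems a general member of each family carries (for instance, whether $2g_1^1$, or the scroll residuals, impose extra $g^r_d$'s, or whether the curve lies on $\mathcal{M}_{11,8}^2$ or $\mathcal{M}_{11,10}^3$ or some other Brill--Noether locus), and check by explicit computation that the generic members differ on this invariant. If a clean geometric invariant is elusive, the fallback is to compute, for one random curve in each family, the tangent space $T_C\mathcal{M}_{11,6}(k)$ together with enough of the local equations of $\mathcal{M}_{11,6}(k)$ near $C$ to see that the two points lie on components whose tangent spaces, as subspaces of $T_C\mathcal{M}_{11}$ cut out by the $l_i$, are not related by any automorphism — though making this rigorous across a whole component is delicate, and is where the argument will require the most care.
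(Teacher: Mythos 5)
There is a genuine gap: your argument hinges on step (1), the explicit construction of a second family with $k$ pencils not equivalent to the degree $9$ plane model, and you never supply it. Worse, the route you suggest for finding it (plane models of other degrees and singularities, or other determinantal models found by random search) is exactly what the paper reports as unsuccessful: in Section \ref{sec:Further components} the authors note that such higher-degree models turn out to be Cremona transformations of the degree $9$ model with respect to three singular points, hence give points of the \emph{same} component $H_k$. So steps (2)--(3) of your plan, while technically sound (they are just Theorems \ref{tangentlocus} and \ref{degree9} rerun on a new curve), have nothing to run on, and step (4) — which you correctly flag as the hard part — cannot even be started without a candidate $C'$.

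The paper's actual proof sidesteps the need for any new explicit family. It starts from a curve $C$ in the known component $H_{k'}$ with $k'>\ell$ pencils ($6\leq k'\leq 9$), where by Theorem \ref{tangentlocus} the divisor $\mathcal{M}_{11,6}$ is locally a union of $k'$ smooth branches $V(f_1),\ldots,V(f_{k'})$, one per pencil. Choosing a subset $I$ of cardinality $\ell\geq 5$ that omits one of the five ``plane-model'' pencils, one deforms $C$ inside $\bigcap_{i\in I}V(f_i)$ away from the remaining branches to get a curve $C''$ with exactly $\ell$ pencils; the normal crossing structure at $C$ already gives that the component through $C''$ has codimension $\ell$ and that $\mathcal{M}_{11,6}$ is normal crossing along it. Distinctness from $H_\ell$ is then proved by semicontinuity of the dimension and degree of the syzygy scheme $\bigcap_{i} X_i$ of the deformed scrolls: by the tabulated data (Remark \ref{aandb}, Table \ref{table4}), a degree $15$ surface as syzygy scheme of five pencils occurs only for the configuration $a=5$, $b=0$, which is unavailable once a plane-model pencil has been dropped, so $C''\notin H_\ell$. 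Your instinct to use the syzygy-scheme configuration as the separating invariant is exactly right, but without the degeneration-from-$H_{k'}$ mechanism and the semicontinuity comparison there is no way to produce the second component, let alone certify it has expected dimension; as written, the proposal is a plan whose crucial first step has no known realization.
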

\begin{proof}
The proof relies on the syzygy schemes and our computation of tangent cone at a point C in $ H_k $.

 Consider $ \eta: \mathcal{W}_{11,6}^1 \longrightarrow\mathcal{M}_{11,6}\subset \mathcal M_{11}$
and let $C$ be a point in our unirational component $H_k \subset \mathcal{M}_{11,6}(k)$ for $ 6\leq k\leq 9 $. 
Then, by the Theorem \ref{tangentlocus}, the tangent cone of 
the Brill-Noether divisor $\mathcal{M}_{11,6} $ is defined by a product 
$l_1 \cdot\ldots\cdot l_k$ of $k$ linearly independent linear forms,
and $\mathcal{W}_{11,6}^1 \longrightarrow \mathcal{M}_{11,6} $ is locally around $C$ the normalization of 
$\mathcal{M}_{11,6} $. Let $f_1, \ldots, f_k$ be power series which define the $k$ branches of $\mathcal{M}_{11,6}$
in an analytic or \'{e}tale neighbourhood $U$ of $C \in \mathcal{M}_{11}$. Then 
$$f_i = l_i + \hbox{ higher order terms }$$
and the zero locus $V(f_i) \subset U$ has the following interpretation:
 $$V(f_i) \cong \{ (C',L'):  \ (C',L')\in\ U_i \},$$
where $ \eta^{-1}(U) = \bigcup_{i=1}^k U_i$ is the disjoint union of smooth $ 3g-4 $ dimensional manifolds with $(C,L_i) \in U_i$ such that $L_i$ denotes line bundle 
corresponding to the the $i-$th pencil $g^{1}_6$ on $C$ 
in some enumeration of the pencils $L_1, \ldots,L_k $  that we fix. \\
The submanifold $B_i=\{f_i=0\}$ then consists of deformations of $C$ induced by 
deformation of pair $(C,L_i)$, and for any family  
$\Delta \subset B_i$ the Kuranishi family restricted to $\Delta$ extends to a 
deformation of the pair $(C,L_i)$
$$
\begin{matrix}
 C &\subset &\mathcal C  & \quad   &   (C,L_i) &\subset &(\mathcal C,\mathcal L_i) \cr
  \downarrow && \downarrow && \downarrow && \downarrow \cr
 0 & \in & \Delta & \quad & 0 & \in& \Delta \cr
\end{matrix}
$$
Let $I \subset \{1,\ldots,k\}$ be any subset of cardinality $\ell\geq 5$ and $C' \in U$ be a point such that
$$C'\in \bigcap_{i\in I} V(f_i) \setminus \bigcup_{j\notin I} V(f_j).$$
Then, by Theorem \ref{tangentlocus}
$$C' \in \mathcal M_{11,6}(\ell) \setminus \mathcal M_{11,6}(\ell+1) $$
since the $l_i$ with $i \in I$
are linearly independent, $\mathcal M_{11,6}(\ell)$ is of codimension $\ell$ and $ \mathcal M_{11,6} $ is a normal crossing divisor around $C'$.

Now, we examine that whether or not $C'$ lies in our component $H_\ell$.
For this purpose, we deform the $L_i$ for $i \in I$ in a one-dimensional family of curves 
$$ \Delta=\lbrace C''\rbrace\subset \bigcap_{i\in I} V(f_i)$$
through $C$ and $C'$, which intersects  $  \bigcup_{j\notin I} V(f_j) $ only in the point $ C $. The syzygy schemes of the $C''\in \Delta$ forms an algebraic family
defined by the intersection of the deformed scrolls $X_i''$ swept out by the deformed line bundle $ L''_i $.
Thus by semicontinuity, the dimension of the syzygy scheme of $C''$ near $C \in \Delta$ is smaller or equal than the dimension of the syzygy scheme $ \bigcap X_i$, and in case of
 equality we should have $\deg(\bigcap X''_i)\leq \deg(\bigcap X_i)$. If we take special syzygy scheme of $ C'' $ corresponding to the syzygies of $ \bigcap_{j\in J}X_j'' $ then likewise we have the semicontinuity compare to $ \bigcap_{j\in J}X_j $. Therefore, for $ C'' $ to lie on $ H_l $ we need a subset $ J\subset I $ of cardinality $ 5 $ such that the syzygy scheme is a surface of degree $ 15 $ (see table \ref{table4}). By the Remark \ref{aandb}, this occurs only if we have $a=5$ and $b=0$. Thus, taking $I$ to be a subset of $ \{2,\ldots,5\} \cup \{6,\ldots,k\}$ we obtain a point $C'' \in \mathcal M_{11,6}(\ell) \setminus H_\ell$. This proves that for $5  \leq \ell \leq 8 $ the stratum $\mathcal{M}_{11,6}(\ell)$ has at least two components, one of which $H_\ell$ and the other a component containing $C''$.
\end{proof}
In paricular, considering the five smooth transversal branches of $ \mathcal{M}_{11,6} $ at a general point $ C $ of the irreducible conponent $ H_5\subset  \mathcal{M}_{11,6}(5)  $, we can deform $ C $ away from one of the branches, in a one-dimensional family of curves with $ 4 $ pencils, which proves the following.
\begin{theorem}
The stratum $ \mathcal{M}_{11,6}(4) $ has an irreducible component of expected dimension $ 26 $.
\end{theorem}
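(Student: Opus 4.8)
The plan is to turn the remark preceding the statement into a proof by analysing the local geometry of $\mathcal{M}_{11,6}$ near a general point of $H_5$. First I would fix a general $C\in H_5\subset\mathcal{M}_{11,6}(5)$. By Theorem~\ref{tangentlocus} (with $m=0$, $k=5$), in an analytic or \'etale neighbourhood $U$ of $C$ in $\mathcal{M}_{11}$ the Brill--Noether divisor $\mathcal{M}_{11,6}$ is a simple normal crossing divisor $V(f_1)\cup\cdots\cup V(f_5)$, where $f_i=l_i+(\text{higher order terms})$ with $l_1,\ldots,l_5$ linearly independent, $V(f_i)$ is the locus of deformations of $C$ preserving the $i$-th pencil $g^1_6$ (as in the proof of Theorem~\ref{othercomponent}), and $\mathcal{M}_{11,6}(5)$ agrees near $C$ with $\bigcap_{i=1}^{5}V(f_i)$, which is smooth of dimension $25$.

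Next I would set $Z:=\bigcap_{i=2}^{5}V(f_i)\cap U$. Since $l_2,\ldots,l_5$ are linearly independent, $Z$ is smooth of codimension $4$ at $C$, hence of dimension $26$, and I would pick a general point $C'\in Z\setminus V(f_1)$ close to $C$ (equivalently, a general one--dimensional $\Delta=\{C''\}\subset Z$ through $C$ meeting $V(f_1)$ only at $C$). Because $U$ is a neighbourhood of $C$, the only branches of $\mathcal{M}_{11,6}$ in $U$ are the five listed above, so a point of $Z\setminus V(f_1)$ lies on exactly the four branches $V(f_2),\ldots,V(f_5)$; hence $C'$ is a smooth $6$--gonal curve of genus $11$ carrying exactly four pencils of degree six, namely the deformations $L_2',\ldots,L_5'$ of $L_2,\ldots,L_5$ along $Z$.

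The heart of the argument is then to identify $\mathcal{M}_{11,6}(4)$ locally at $C'$ with $Z$. Only the branches $V(f_2),\ldots,V(f_5)$ pass through $C'$, so the locus of curves in $U$ with at least four pencils of degree six coincides near $C'$ with $\bigcap_{i=2}^{5}V(f_i)=Z$; deleting from $Z$ the $25$--dimensional sublocus $\bigcap_{i=1}^{5}V(f_i)$ of curves possessing a fifth pencil, one sees that $\mathcal{M}_{11,6}(4)$ is, near $C'$, a smooth irreducible germ of dimension $26$. Consequently $C'$ lies on a unique irreducible component of $\mathcal{M}_{11,6}(4)$, of dimension exactly $26=3\cdot 11-3+4\,\rho(11,6,1)$, which is its expected dimension.

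I expect the main obstacle to be the verification that the four deformed pencils $L_2',\ldots,L_5'$ are genuinely of type I and pairwise independent, and that $C'$ acquires no spurious fifth pencil. Pairwise independence is an open condition on $\mathcal{W}_{11,6}^1$ and so is inherited for $C'$ close to $C$; the absence of a further pencil is exactly the statement that $\mathcal{M}_{11,6}$ has no branch through $C'$ besides $V(f_2),\ldots,V(f_5)$, which holds since $C'\in U$; and the type I property should follow from the fact that near $C'$ the local equation of the Koszul divisor $\mathcal{K}_{11}=5\mathcal{M}_{11,6}$ still factors, up to a unit, as $l_2^{5}\cdots l_5^{5}$ (the restriction of $\det M$ from Theorem~\ref{tangentlocus}), so that each surviving pencil contributes with multiplicity one, exactly as a type I pencil must.
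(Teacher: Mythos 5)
Your proposal is correct and follows essentially the same route as the paper: the paper's own (very terse) argument is precisely to take a general $C\in H_5$, use the local normal crossing description of $\mathcal{M}_{11,6}$ from Theorem~\ref{tangentlocus}, and deform $C$ away from one branch while staying on the other four, whose transversal intersection is smooth of dimension $26$. Your write-up merely makes explicit the identification of the branches with pencil-preserving deformations and the openness of the type~I and independence conditions, which the paper leaves implicit.
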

\begin{remark}
\label{aandb}
For the model of plane curve of degree $ 9 $ with nine pencils described in \ref{sec:Planar model description}, we have computed the dimension, degree and the Betti table of the syzyzgy schemes associated to different number $ 2\leq l\leq 9 $ of pencils $ g^{1}_{6}$'s. We recall that for a number of pencils indexed by a subset $ I\subset  \lbrace 1,\ldots,9 \rbrace  $, the associated syzygy scheme is the intersection $ \bigcap_{i\in I} X_i $ of the scrolls swept out by each of the pencils. Let $ 1\leq a\leq 5 $ be the number of chosen pencils which are induced by projection from the triple points or the pencil of conics. Likewise, let $ 1\leq b\leq 4 $ be the number of chosen pencils arised from the pencil of cubics through the certain number of points. In the following tables, and for a specific genus $ 11 $ curve possessing nine pencils, we have listed the numerical data of the plausible syzygy schemes arised form different number $ l=a+b\geq 2 $ of the existing pencils $ g^{1}_{6}$'s. In \cite{kenesh}, one can compute an example of such a curve over a finite field of characteristic $ p $, by running the function \texttt{random6gonalGenus11Curvekpencil(p,9)}. In particular, the function \texttt{verifyAsserion(5)} provides the explicit equation of our specific curve and the collection of the nine scrolls. In the columns "dim" and "deg" we have marked the possible dimension and the degree of the corresponding syzygy schemes for this specific curve. Based on our experiments, it turns out that the values only depend on the numbers $ a $ and $ b $ of the chosen pencils.\\
\begin{longtable}{ |l|l|l|l|l|l| }
\multicolumn{6}{ c } {}\\
\hline
 & $  $ & $ \dim $ &  $ \deg $ & \text{genus} & \hspace*{2.5cm}$ \text{ Betti table} $\\ 
\hline
$\ \  a=0 $ & $\ \ b=2 $ & $\  2 $ &  $ 18$ &  & \ \small{\begin{tabular}{|c c c c c c c c c }
\noalign{\vskip 2mm}
  \hline   
  $1$ & $.$ & $ . $& $.$ & $.$ & $.$& $ . $ &$ . $&$ . $\\ 
  $.$ & $27$ & $ 96 $&$127$ & $48$ & $10$& $ . $& $ . $& $ . $\\ 
  $.$ &$.$ & $1$ & $ 48 $& $220$ & $288$ &$189$ &$ 64 $&$ 9 $ \\
\noalign{\vskip 2mm}           
\end{tabular}}\\
\hline
$\ \  a=1 $ & $ \ \ b=1 $ & $\  2 $ &  $ 18$ &  &\ \small{\begin{tabular}{|c c c c c c c c c }
\noalign{\vskip 2mm} 
  \hline
  $1$ & $.$ & $ . $& $.$ & $.$ & $.$& $ . $ &$ . $&$ . $\\ 
  $.$ & $27$ & $ 96 $&$127$ & $48$ & $10$& $ . $& $ . $& $ . $\\ 
  $.$ &$.$ & $1$ & $ 48 $& $220$ & $288$ &$189$ &$ 64 $&$ 9 $ \\ 
  \noalign{\vskip 2mm}            
\end{tabular} }\\
\hline
$\ \  a=2 $ & $\ \ b=0 $ & $\  2 $ &  $ 18$ &  &\  \small{\begin{tabular}{|c c c c c c c c c }
\noalign{\vskip 2mm} 
  \hline
  $1$ & $.$ & $ . $& $.$ & $.$ & $.$& $ . $ &$ . $&$ . $\\ 
  $.$ & $27$ & $ 96 $&$127$ & $48$ & $10$& $ . $& $ . $& $ . $\\ 
  $.$ &$.$ & $ 1$& $48$ & $220$ &$288$ &$ 189 $&$ 64 $& $9 $ \\   
  \noalign{\vskip 2mm}        
\end{tabular}}\\
\hline 
\caption{\label{table1} Numerical data of possible syzygy schemes with $ a+b=2 $.}
\end{longtable}
\newpage
\begin{longtable}{ |l|l|l|l|l|l| }
\multicolumn{6}{ c } {}\\
\hline
 & $  $ & $ \dim $ &  $ \deg $ & \text{genus} & \hspace*{2.5cm}$ \text{ Betti table} $\\ 
\hline
$ \ \ a=0 $ & $ \ \ b=3 $ & $\  1 $ & 
$ 21 $
&   $ 12 $ & \
 \small{ \begin{tabular}{|c c c c c c c c c c}
 \noalign{\vskip 2mm} 
 \hline
  $1$ & $.$ & $ . $& $.$ & $.$ & $.$& $ . $ &$ . $&$ . $&$ . $\\ 
  $.$ & $35$ & $ 151 $&$279$ & $207$ & $15$& $ . $& $ . $& $ . $& $ . $\\ 
  $.$ &$.$ & $.$ & $ 3 $& $141$ & $414$ &$399$ &$ 196 $&$ 45 $&$ 1 $\\  
  $.$ &$.$ & $.$ & $ . $& $.$ & $.$ &$.$ &$ . $&$ . $&$ 1 $ \\
  \noalign{\vskip 2mm}             
\end{tabular}} \\ 
\hline
$ \ a=1 $ & $ \ b=2 $ & $\  1 $ &  $ 20$ & $ \ 11 $ & \small{ 
 \begin{tabular}{|c c c c c c c c c c}
 \noalign{\vskip 2mm} 
 \hline
  $1$ & $.$ & $ . $& $.$ & $.$ & $.$& $ . $ &$ . $&$ . $&$ . $\\ 
  $.$ & $36$ & $ 160 $&$315$ & $288$ & $45$& $ . $& $ . $& $ . $& $ . $\\ 
  $.$ &$.$ & $.$ & $ . $& $45$ & $288$ &$315$ &$ 160 $&$ 36 $&$ . $\\  
  $.$ &$.$ & $.$ & $ . $& $.$ & $.$ &$.$ &$ . $&$ . $&$ 1 $  \\
  \noalign{\vskip 2mm}            
\end{tabular}}\\   
\hline
$ \ a=2$ & $ \ \ b=1 $ & $\  1 $ &  $ 21$ & $ \ 12 $ &  \small{ \begin{tabular}{|c c c c c c c c c c}
\noalign{\vskip 2mm} 
 \hline
  $1$ & $.$ & $ . $& $.$ & $.$ & $.$& $ . $ &$ . $&$ . $&$ . $\\ 
  $.$ & $35$ & $ 151 $&$279$ & $210$ & $30$& $ . $& $ . $& $ . $& $ . $\\ 
  $.$ &$.$ & $.$ & $ . $& $6$ & $156$ &$414$ &$ 399 $&$ 45 $&$ 1 $\\  
  $.$ &$.$ & $.$ & $ . $& $.$ & $.$ &$.$ &$ . $&$ . $&$ 1 $ \\
  \noalign{\vskip 2mm}             
\end{tabular}}\\
\hline 
$\ \  a=3 $ & $\ \ b=0 $ & $\  2 $ &  $ 16$ &  &\  \small{\begin{tabular}{|c c c c c c c c c }
\noalign{\vskip 2mm} 
  \hline
  $1$ & $.$ & $ . $& $.$ & $.$ & $.$& $ . $ &$ . $&$ . $\\ 
  $.$ & $29$ & $ 112 $&$182$ & $113$ & $15$& $ . $& $ . $& $ . $\\ 
  $.$ &$.$ & $.$& $1$ & $ 85 $& $176$ & $133$ &$48$ &$ 7 $ \\ 
  \noalign{\vskip 2mm}          
\end{tabular}}\\
\hline 
\caption{\label{table2} Numerical data of possible syzygy schemes with $ a+b=3 $.}
\end{longtable}
\begin{longtable}{ |l|l|l|l|l|l| }
\multicolumn{6}{ c } {}\\
\hline
 & $  $ & $ \dim $ &  $ \deg $ & \text{genus} & \hspace*{2.5cm}$ \text{ Betti table} $\\ 
\hline
$ \ a=0 $ & $ \ b=4 $ & $\  1 $ &  $ 20$ & $ \ 11 $ & \small{ 
 \begin{tabular}{|c c c c c c c c c c}
 \noalign{\vskip 2mm} 
 \hline
  $1$ & $.$ & $ . $& $.$ & $.$ & $.$& $ . $ &$ . $&$ . $&$ . $\\ 
  $.$ & $36$ & $ 160 $&$315$ & $288$ & $45$& $ . $& $ . $& $ . $& $ . $\\ 
  $.$ &$.$ & $.$ & $ . $& $45$ & $288$ &$315$ &$ 160 $&$ 36 $&$ . $\\  
  $.$ &$.$ & $.$ & $ . $& $.$ & $.$ &$.$ &$ . $&$ . $&$ 1 $  \\
  \noalign{\vskip 2mm}            
\end{tabular}}\\   
\hline
$ \ a=1 $ & $ \ b=3 $ & $\  1 $ &  $ 20$ & $ \ 11 $ & \small{ 
 \begin{tabular}{|c c c c c c c c c c}
 \noalign{\vskip 2mm} 
 \hline
  $1$ & $.$ & $ . $& $.$ & $.$ & $.$& $ . $ &$ . $&$ . $&$ . $\\ 
  $.$ & $36$ & $ 160 $&$315$ & $288$ & $45$& $ . $& $ . $& $ . $& $ . $\\ 
  $.$ &$.$ & $.$ & $ . $& $45$ & $288$ &$315$ &$ 160 $&$ 36 $&$ . $\\  
  $.$ &$.$ & $.$ & $ . $& $.$ & $.$ &$.$ &$ . $&$ . $&$ 1 $  \\
  \noalign{\vskip 2mm}            
\end{tabular}}\\   
\hline
$ \ a=2 $ & $ \ b=2 $ & $\  1 $ &  $ 20$ & $ \ 11 $ & \small{ 
 \begin{tabular}{|c c c c c c c c c c}
 \noalign{\vskip 2mm} 
 \hline
  $1$ & $.$ & $ . $& $.$ & $.$ & $.$& $ . $ &$ . $&$ . $&$ . $\\ 
  $.$ & $36$ & $ 160 $&$315$ & $288$ & $45$& $ . $& $ . $& $ . $& $ . $\\ 
  $.$ &$.$ & $.$ & $ . $& $45$ & $288$ &$315$ &$ 160 $&$ 36 $&$ . $\\  
  $.$ &$.$ & $.$ & $ . $& $.$ & $.$ &$.$ &$ . $&$ . $&$ 1 $  \\
  \noalign{\vskip 2mm}            
\end{tabular}}\\   
\hline 
$ \ \ a=3 $ & $ \ \ b=1 $ & $\  1 $ & 
$ 21 $
&   $ \ 12 $ & 
 \small{ \begin{tabular}{|c c c c c c c c c c}
 \noalign{\vskip 2mm} 
 \hline
  $1$ & $.$ & $ . $& $.$ & $.$ & $.$& $ . $ &$ . $&$ . $&$ . $\\ 
  $.$ & $35$ & $ 151 $&$279$ & $210$ & $30$& $ . $& $ . $& $ . $& $ . $\\ 
  $.$ &$.$ & $.$ & $ . $& $6$ & $156$ &$414$ &$ 399 $&$ 45 $&$ 1 $\\  
  $.$ &$.$ & $.$ & $ . $& $.$ & $.$ &$.$ &$ . $&$ . $&$ 1 $ \\
  \noalign{\vskip 2mm}             
\end{tabular}} \\ 
\hline 
$\ \  a=4 $ & $\ \ b=0 $ & $\  2 $ &  $ 15$ &  &\  \small{\begin{tabular}{|c c c c c c c c c }
\noalign{\vskip 2mm} 
  \hline
  $1$ & $.$ & $ . $& $.$ & $.$ & $.$& $ . $ &$ . $&$ . $\\ 
  $.$ & $30$ & $ 120 $&$210$ & $169$ & $25$& $ . $& $ . $& $ . $\\ 
  $.$ &$.$ & $ . $&$1$ & $ 25 $& $120$ & $105$ &$40$ &$ 6 $ \\ 
  \noalign{\vskip 2mm}          
\end{tabular}}\\
\hline 
\caption{\label{table3} Numerical data of possible syzygy schemes with $ a+b=4 $.}
\end{longtable}
\newpage
\begin{longtable}{ |l|l|l|l|l|l| }
\multicolumn{6}{ c } {}\\
\hline
 & $  $ & $ \dim $ &  $ \deg $ & \text{genus} & \hspace*{2.5cm}$ \text{ Betti table} $\\ 
\hline
$ \ a=1 $ & $ \ b=4 $ & $\  1 $ &  $ 20$ & $ \ 11 $ & \small{ 
 \begin{tabular}{|c c c c c c c c c c}
 \noalign{\vskip 2mm} 
 \hline
  $1$ & $.$ & $ . $& $.$ & $.$ & $.$& $ . $ &$ . $&$ . $&$ . $\\ 
  $.$ & $36$ & $ 160 $&$315$ & $288$ & $45$& $ . $& $ . $& $ . $& $ . $\\ 
  $.$ &$.$ & $.$ & $ . $& $45$ & $288$ &$315$ &$ 160 $&$ 36 $&$ . $\\  
  $.$ &$.$ & $.$ & $ . $& $.$ & $.$ &$.$ &$ . $&$ . $&$ 1 $  \\
  \noalign{\vskip 2mm}            
\end{tabular}}\\   
\hline
$ \ a=2 $ & $ \ b=3 $ & $\  1 $ &  $ 20$ & $ \ 11 $ & \small{ 
 \begin{tabular}{|c c c c c c c c c c}
 \noalign{\vskip 2mm} 
 \hline
  $1$ & $.$ & $ . $& $.$ & $.$ & $.$& $ . $ &$ . $&$ . $&$ . $\\ 
  $.$ & $36$ & $ 160 $&$315$ & $288$ & $45$& $ . $& $ . $& $ . $& $ . $\\ 
  $.$ &$.$ & $.$ & $ . $& $45$ & $288$ &$315$ &$ 160 $&$ 36 $&$ . $\\  
  $.$ &$.$ & $.$ & $ . $& $.$ & $.$ &$.$ &$ . $&$ . $&$ 1 $  \\
  \noalign{\vskip 2mm}            
\end{tabular}}\\   
\hline
$ \ a=3 $ & $ \ b=2 $ & $\  1 $ &  $ 20$ & $ \ 11 $ & \small{ 
 \begin{tabular}{|c c c c c c c c c c}
 \noalign{\vskip 2mm} 
 \hline
  $1$ & $.$ & $ . $& $.$ & $.$ & $.$& $ . $ &$ . $&$ . $&$ . $\\ 
  $.$ & $36$ & $ 160 $&$315$ & $288$ & $45$& $ . $& $ . $& $ . $& $ . $\\ 
  $.$ &$.$ & $.$ & $ . $& $45$ & $288$ &$315$ &$ 160 $&$ 36 $&$ . $\\  
  $.$ &$.$ & $.$ & $ . $& $.$ & $.$ &$.$ &$ . $&$ . $&$ 1 $  \\
  \noalign{\vskip 2mm}            
\end{tabular}}\\   
\hline 
$ \ a=4 $ & $ \ b=1 $ & $\  1 $ &  $ 21$ & $ \ 12 $ &
 \small{ \begin{tabular}{|c c c c c c c c c c}
 \noalign{\vskip 2mm} 
 \hline
  $1$ & $.$ & $ . $& $.$ & $.$ & $.$& $ . $ &$ . $&$ . $&$ . $\\ 
  $.$ & $35$ & $ 151 $&$279$ & $210$ & $30$& $ . $& $ . $& $ . $& $ . $\\ 
  $.$ &$.$ & $.$ & $ . $& $6$ & $156$ &$414$ &$ 399 $&$ 45 $&$ 1 $\\  
  $.$ &$.$ & $.$ & $ . $& $.$ & $.$ &$.$ &$ . $&$ . $&$ 1 $ \\
  \noalign{\vskip 2mm}             
\end{tabular}} \\ 
\hline 
$\ \  a=5 $ & $\ \ b=0 $ & $\  2 $ &  $ 15$ &  &\  \small{\begin{tabular}{|c c c c c c c c c }
\noalign{\vskip 2mm} 
  \hline
  $1$ & $.$ & $ . $& $.$ & $.$ & $.$& $ . $ &$ . $&$ . $\\ 
  $.$ & $30$ & $ 120 $&$210$ & $169$ & $25$& $ . $& $ . $& $ . $\\ 
  $.$ &$.$ & $ . $&$1$ & $ 25 $& $120$ & $105$ &$40$ &$ 6 $ \\ 
  \noalign{\vskip 2mm}        
\end{tabular}}\\
\hline 
\caption{\label{table4} Numerical data of possible syzygy schemes with $ a+b=5 $.}
\end{longtable}
\begin{longtable}{ |l|l|l|l|l|l| }
\multicolumn{6}{ c } {}\\
\hline
 & $  $ & $ \dim $ &  $ \deg $ & \text{genus} & \hspace*{2.5cm}$ \text{ Betti table} $\\ 
\hline
$ \ a=2 $ & $ \ b=4 $ & $\  1 $ &  $ 20$ & $ \ 11 $ & \small{ 
 \begin{tabular}{|c c c c c c c c c c}
 \noalign{\vskip 2mm} 
 \hline
  $1$ & $.$ & $ . $& $.$ & $.$ & $.$& $ . $ &$ . $&$ . $&$ . $\\ 
  $.$ & $36$ & $ 160 $&$315$ & $288$ & $45$& $ . $& $ . $& $ . $& $ . $\\ 
  $.$ &$.$ & $.$ & $ . $& $45$ & $288$ &$315$ &$ 160 $&$ 36 $&$ . $\\  
  $.$ &$.$ & $.$ & $ . $& $.$ & $.$ &$.$ &$ . $&$ . $&$ 1 $  \\
  \noalign{\vskip 2mm}            
\end{tabular}}\\   
\hline
$ \ a=3 $ & $ \ b=3 $ & $\  1 $ &  $ 20$ & $ \ 11 $ & \small{ 
 \begin{tabular}{|c c c c c c c c c c}
 \noalign{\vskip 2mm} 
 \hline
  $1$ & $.$ & $ . $& $.$ & $.$ & $.$& $ . $ &$ . $&$ . $&$ . $\\ 
  $.$ & $36$ & $ 160 $&$315$ & $288$ & $45$& $ . $& $ . $& $ . $& $ . $\\ 
  $.$ &$.$ & $.$ & $ . $& $45$ & $288$ &$315$ &$ 160 $&$ 36 $&$ . $\\  
  $.$ &$.$ & $.$ & $ . $& $.$ & $.$ &$.$ &$ . $&$ . $&$ 1 $  \\
  \noalign{\vskip 2mm}            
\end{tabular}}\\   
\hline
$ \ a=4 $ & $ \ b=2 $ & $\  1 $ &  $ 20$ & $ \ 11 $ & \small{ 
 \begin{tabular}{|c c c c c c c c c c}
 \noalign{\vskip 2mm} 
 \hline
  $1$ & $.$ & $ . $& $.$ & $.$ & $.$& $ . $ &$ . $&$ . $&$ . $\\ 
  $.$ & $36$ & $ 160 $&$315$ & $288$ & $45$& $ . $& $ . $& $ . $& $ . $\\ 
  $.$ &$.$ & $.$ & $ . $& $45$ & $288$ &$315$ &$ 160 $&$ 36 $&$ . $\\  
  $.$ &$.$ & $.$ & $ . $& $.$ & $.$ &$.$ &$ . $&$ . $&$ 1 $  \\
  \noalign{\vskip 2mm}            
\end{tabular}}\\   
\hline 
$ \ a=5 $ & $ \ b=1 $ & $\  1 $ &  $ 21$ & $ \ 12 $ &
 \small{ \begin{tabular}{|c c c c c c c c c c}
 \noalign{\vskip 2mm} 
 \hline
   $1$ & $.$ & $ . $& $.$ & $.$ & $.$& $ . $ &$ . $&$ . $&$ . $\\ 
  $.$ & $35$ & $ 151 $&$279$ & $210$ & $30$& $ . $& $ . $& $ . $& $ . $\\ 
  $.$ &$.$ & $.$ & $ . $& $6$ & $156$ &$414$ &$ 399 $&$ 45 $&$ 1 $\\  
  $.$ &$.$ & $.$ & $ . $& $.$ & $.$ &$.$ &$ . $&$ . $&$ 1 $ \\
  \noalign{\vskip 2mm}             
\end{tabular}} \\ 
\hline 
\caption{\label{table5} Numerical data of possible syzygy schemes with $ a+b=6 $.}
\end{longtable}
\newpage

\begin{longtable}{ |l|l|l|l|l|l| }
\multicolumn{6}{ c } {}\\
\hline
 & $  $ & $ \dim $ &  $ \deg $ & \text{genus} & \hspace*{2.5cm}$ \text{ Betti table} $\\ 
\hline
$ \ a$ & $ \ b $ & $\  1 $ &  $ 20$ & $ \ 11 $ & \small{ 
 \begin{tabular}{|c c c c c c c c c c}
 \noalign{\vskip 2mm} 
 \hline
  $1$ & $.$ & $ . $& $.$ & $.$ & $.$& $ . $ &$ . $&$ . $&$ . $\\ 
  $.$ & $36$ & $ 160 $&$315$ & $288$ & $45$& $ . $& $ . $& $ . $& $ . $\\ 
  $.$ &$.$ & $.$ & $ . $& $45$ & $288$ &$315$ &$ 160 $&$ 36 $&$ . $\\  
  $.$ &$.$ & $.$ & $ . $& $.$ & $.$ &$.$ &$ . $&$ . $&$ 1 $  \\
  \noalign{\vskip 2mm}            
\end{tabular}}\\   
\hline
\caption{\label{table6} Numerical data of possible syzygy schemes with $ a+b\geq 7$.}
\end{longtable}
\end{remark}

\thanks{Hanieh Keneshlou: \texttt{keneshlo@mis.mpg.de}\\
\thanks{Frank-Olaf Schreyer: \texttt{schreyer@math.uni-sb.de}
\end{document}